\newtheorem{lemma}{Lemma}
\newtheorem{theorem}{Theorem}
\newtheorem*{conjecture}{Conjecture}
\newtheorem*{question}{Question}
\theoremstyle{definition}
\newtheorem*{cor}{Corollary}
\title{\vspace{-2cm}On the number of $k$-gons in finite projective planes}
\author{Vladislav Taranchuk\thanks{Department of Mathematical Sciences, University of Delaware.} \\
}
\date{\today}
\begin{document}
\addtolength{\jot}{0.7em}
\maketitle
\begin{abstract}
    Let $\Pi$ be a projective plane of order $n$ and $\Gamma_{\Pi}$ be its Levi graph (the point-line incidence graph). For fixed $k \geq 3$, let $c_{2k}(\Gamma_{\Pi})$ denote the number of $2k$-cycles in $\Gamma_{\Pi}$. In this paper we show that 
    $$
    c_{2k}(\Gamma_{\Pi}) = \frac{1}{2k}n^{2k} + O(n^{2k-2}),  \hspace{0.5cm} n \rightarrow \infty.
    $$
    We also state a conjecture regarding the third and fourth largest terms in the asymptotic of the number of $2k$-cycles in $\Gamma_{\Pi}$. This result was also obtained independently by Voropaev \cite{Vor} in 2012.
    
    Let $\text{ex}(v, C_{2k}, \mathcal{C}_{\text{odd}}\cup \{C_4\})$ denote the greatest number of $2k$-cycles amongst all bipartite graphs of order $v$ and girth at least 6. As a corollary of the result above, we obtain
    $$
    \text{ex}(v, C_{2k}, \mathcal{C}_{\text{odd}}\cup \{C_4\}) = \left(\frac{1}{2^{k+1}k}-o(1)\right)v^k, \hspace{0.5cm} v \rightarrow \infty.
    $$
\end{abstract}
\section{Introduction}

Over the years, many questions have surfaced regarding counting the number of certain substructures within a projective plane. In this paper we contribute to an open question in the area. We omit the standard definitions related to finite geometries and graph theory. For all undefined notions in finite geometries we refer the reader to Casse \cite{Casse}, and to Bollobas \cite{BG} for all graph theoretic notions. We will also need the following definitions and notation.

Let $\Pi$ denote a projective plane of order $n$. Then $N = n^2 + n +1$ represents the number of points and the number of lines in $\Pi$. If $A$ and $B$ are points of  $\Pi$, we write $AB$ for the line containing them. We write $S_k$ for the group of all permutations of $\{ 1, 2, \dots, k \}$, the symmetric group.


The \textit{point-line incidence graph $\Gamma_{\Pi}$} of $\Pi$, also known as the \textit{Levi graph} of $\Pi$, is the bipartite graph with the set of points of $\Pi$ to be one vertex part and the set of lines of $\Pi$ to be the other vertex part. A point $P$ is adjacent to a line $\ell$ in $\Gamma_{\Pi}$ if $P$ lies on $\ell$ in $\Pi$.
We write $P \sim \ell$ to denote adjacency of a point and line in $\Gamma_{\Pi}$.


Let $H$ be a graph and $\mathcal{F}$ be a family of (forbidden) graphs. Let ex$(n, H, \mathcal{F})$ denote the maximum number of copies of $H$ in an $n$-vertex graph containing no graphs in $\mathcal{F}$ as a subgraph. When $H = K_2$ (just an edge), then a simplified notation is used for ex$(n, K_2, \mathcal{F})$, namely ex$(n, \mathcal{F})$, and it is  often called the \textit{Tur\'{a}n number} of $\mathcal{F}$. Clearly, ex$(n, \mathcal{F})$ denotes the largest number of edges an $n$-vertex graph can have without containing any subgraphs from $\mathcal{F}$. Any $n$-vertex graph with ex$(n, \mathcal{F})$ edges is called an extremal graph for $\mathcal{F}$. The problem of determining ex$(n, \mathcal{F})$ is usually referred to as a \textit{Tur\'{a}n type} problem. For the extensive literature related to Tur\'{a}n type problems, see Bollobas \cite{Bol2}, F\"{u}redi \cite{Fur}, F\"{u}redi and Simonovits \cite{FuS}, Verstra\"{e}te \cite{Ver}, Mubayi and Verstra\"{e}te \cite{MVer}, Lazebnik, Sun, and Wang \cite{FelSun}.

Let $C_k$ denote a cycle of length $k$, $\mathcal{C}_k = \{ C_3, C_4, \dots, C_k \}$, and $\mathcal{C}_{\text{odd}} = \{ C_3, C_5, C_7, \dots \}$ the set of all odd cycles. Some early attention that ex$(n, H, \mathcal{F})$ received was from Erd\"{o}s \cite{Erd1} who stated a conjecture regarding the extremal graph of ex$(n, C_5, \{ C_3 \})$. This conjecture was resolved by Hatami, Hladk\'{y}i, Kr\'{a}l, Norine, and Razborov \cite{Hatam} and independently by Grzesik \cite{Grez}, building on the work of Gy\"{o}ri \cite{Gyor}. 
The more recent wave of interest in ex$(n, H, \mathcal{F})$ was initiated by Alon and Shikelman \cite{Aloshil}. 
There have been several new results regarding the growth rate of ex$(n, H, \mathcal{F})$ where $\mathcal{F} = \mathcal{C}_{2m}$ or $\mathcal{F} = \{ C_{2m} \}$, with resolution up to the leading term in certain cases. We refer the reader to the papers \cite{GerPal},  \cite{GerG}, and \cite{SolW} for the most up to date reading regarding ex$(n, H, \mathcal{C}_{2m})$ and ex$(n, H, \{ C_{2m} \})$.


Note that $(\mathcal{C}_{\text{odd}}\cup \{ C_4 \})$-free graphs are bipartite graphs of girth at least 6.
Given that the Levi graph of a projective plane is bipartite and has girth 6, it serves as a good candidate for obtaining lower bounds on ex($n, H, \mathcal{F}$). In this paper, we will be counting the number of cycles of length $2k$ in the Levi graph, and consequently we obtain lower bounds on ex$(v, C_{2k}, \mathcal{C}_{\text{odd}}\cup \{C_4\})$. We denote the number of cycles of length $2k$ in a graph $G$ by $c_{2k}(G)$.

As a simpler problem, one can count the number of closed walks of length $2k$ in the Levi graph of a projective plane $\Pi$ of order $n$. One can easily show that this number depends only on $k$ and $n$, and not on the actual plane. Let $\Pi$ be a projective plane of order $n$ and $\Gamma_{\Pi}$ its Levi graph. Let $A$ be the adjacency matrix of $\Gamma_{\Pi}$. As $A$ is a real symmetric matrix, then all of its eigenvalues are real. Let $\lambda_1 \geq \lambda_2 \dots \geq \lambda_{2N}$ be the eigenvalues of $A$. By considering eigenvalues of $A^2$, it can be deduced that $ \lambda_1 = n+1$ and $\lambda_{2N} = -(n+1)$, each with multiplicity one, and all other eigenvalues are equal to $\pm\sqrt{n}$ each with multiplicity $N - 1$. It follows, see \cite{NBig}, that the number of closed walks of length $2k$ in $\Gamma_{\Pi}$ is given by
$$
\text{Trace}(A^{2k}) = \sum_{i = 1}^{2N} \lambda_i^{2k} = 2(n+1)^{2k}+2(N - 1)n^k
$$
and so the number of closed walks of length $2k$ in $\Gamma_{\Pi}$ depends only on $n$ and $k$, and so it is the same for all projective planes of order $n$.

This may lead one to ask, what other structures appear in a finite projective plane $\Pi$ and does the number of these structures depend only the order of the plane? In general the answer is no, as can be observed in the case of Desargues and Pappus configurations. Another interesting example concerns the number of $k$-arcs. Define a $k$-\textit{arc} in a projective plane $\Pi$ to be a set of $k$ points of $\Pi$, no three of which are collinear. For $k \leq 6$, Glynn \cite{Glyn} showed that the number of $k$-arcs in a plane of order $n$ does not depend on the plane. Furthermore, in \cite{Glyn}, Glynn computes an expression for the number of 7-arcs in any finite projective plane, and using this expression deduces that there do not exist projective planes of order $6$, as evaluating the formula at 6 yields a negative value. Glynn's work counting $k$-arcs was recently extended by Kaplan, Kimport, Lawrence, Peilen and Weinreich \cite{Kap} who determined an expression for the number of 9-arcs in an arbitrary projective plane. It is worth mentioning that for $k = 7, 8, 9$, the formula for the number of $k$-arcs depends on more than just $k$ and the order of the plane.

In \cite{LazMelV1} Lazebnik, Mellinger, and Vega demonstrate that it is possible to embed a $2k$-cycle of every possible size into the Levi graph of any finite affine or projective plane. This was further extended by Aceves, Heywood, Klahr, and Vega \cite{Ace} who showed that one can embed a $2k$-cycle of every possible size into the Levi graph of the projective space $PG(d, q)$. Moreover, in a different paper Lazebnik, Mellinger, and Vega \cite{LazMelV2} motivated the study of counting $2k$-cycles in the Levi graph with the following two questions. For fixed $k \geq 3$:
\begin{enumerate}
    \item Which $C_4$-free bipartite graphs with partitions of size $N$ contain the greatest number of $2k$-cycles?
    \item Do the Levi graphs of all affine or projective planes of order $n$ contain the same number of $2k$-cycles?
\end{enumerate}
Fiorini and Lazebnik \cite{FioL} show that the Levi graph of a projective plane has the largest number of $C_6$'s among all $C_4$-free bipartite graphs with size parts. This work was extended by  
De Winter, Lazebnik, and Verstra\"{e}te \cite{Wint} who show that the same holds when $k = 4$ and $n \geq 157$. In \cite{LazMelV2}, progress towards question 2 is made as the exact value of $c_{2k}(\Gamma_{\Pi})$ is determined for $k = 3, 4, 5, 6$, showing that in these cases, $c_{2k}(\Gamma_{\Pi})$ depends only on the order of $\Pi$. This work was further extended by Voropaev \cite{Vor}, again demonstrating that $c_{2k}(\Gamma_{\Pi})$ depends only on the order of $\Pi$ up to $k = 10$. Determining explicit formulas for larger $k$ may very well have interesting consequences just like in the example of formula for the number of 7-arcs in a projective plane.

In this paper, we make some progress towards resolving question 2 as we determine the first and second leading terms in the asymptotic of the number of $k$-gons in an arbitrary projective plane. The magnitude of the leading term for the number of $k$-gons is shown to be the same as that of the number of closed walks of length $2k$ in the Levi graph of a finite projective plane, but with a different leading coefficient.

Here we list our main results.

\bigskip

\begin{theorem}
Let $\Pi$ be a projective plane of order $n$ and $\Gamma_{\Pi}$ be its Levi graph. Then for fixed $k \geq 4$,
$$
c_{2k}(\Gamma_{\Pi}) = \frac{1}{2k}n^{2k} +O(n^{2k-2}), \hspace{1cm} n \rightarrow \infty
$$
\end{theorem}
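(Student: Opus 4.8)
The plan is to count $2k$-cycles directly, exploiting the defining property of a projective plane that any two distinct points lie on a unique line. A $2k$-cycle in $\Gamma_{\Pi}$ is an alternating closed sequence of $k$ distinct points and $k$ distinct lines, and since the line $\ell_i = P_iP_{i+1}$ joining two consecutive points is uniquely determined, such a cycle is completely specified by the cyclic tuple of points $(P_1, \dots, P_k)$. First I would fix the bookkeeping: writing $\mathcal{S}$ for the set of ordered tuples $(P_1, \dots, P_k)$ of distinct points whose $k$ joining lines $\ell_i = P_iP_{i+1}$ (indices mod $k$) are pairwise distinct, each $2k$-cycle arises from exactly $2k$ such tuples (a choice of starting point and of orientation), so that $c_{2k}(\Gamma_{\Pi}) = \frac{1}{2k}|\mathcal{S}|$. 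Any chords are irrelevant here, since a copy of $C_{2k}$ is determined by its edge set and admits exactly $2k$ cyclic representations, so no overcounting occurs.

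Next I would estimate $|\mathcal{S}|$ by inclusion--exclusion over the coincidences among $\ell_1, \dots, \ell_k$. The main term counts ordered tuples of $k$ distinct points with no constraint, namely the falling factorial $N(N-1)\cdots(N-k+1)$; expanding $N = n^2+n+1$ gives $n^{2k} + k\,n^{2k-1} + O(n^{2k-2})$. From this I subtract the tuples in which two joining lines coincide. A coincidence $\ell_i = \ell_j$ with $i,j$ nonadjacent forces the four distinct points $P_i, P_{i+1}, P_j, P_{j+1}$ onto a common line, a codimension-two condition contributing only $O(n^{2k-2})$; but for an \emph{adjacent} pair (say $j = i+1$) the condition $\ell_i = \ell_{i+1}$ merely says that the consecutive points $P_i, P_{i+1}, P_{i+2}$ are collinear. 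Counting these (choose the common line, then three distinct points on it, then extend the remaining points freely) yields exactly $n^{2k-1} + O(n^{2k-2})$, and there are $k$ adjacent pairs. Thus the single-coincidence terms remove $k\,n^{2k-1} + O(n^{2k-2})$, precisely cancelling the subleading term of the falling factorial and leaving $|\mathcal{S}| = n^{2k} + O(n^{2k-2})$, which gives the theorem.

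To make the error control rigorous I would prove a \emph{degrees of freedom} lemma: a point chosen freely ranges over $\Theta(n^2)$ choices, while a point constrained to a previously determined line ranges over $\Theta(n)$ choices. Each coincidence pattern then carries order $n^{2k-d}$, where $d$ is the number of degrees of freedom it destroys, and one checks that $d \geq 2$ for every pattern except the empty one ($d=0$) and the $k$ isolated adjacent coincidences ($d=1$); in particular every term with two or more imposed equalities is $O(n^{2k-2})$. This uniform bound over all collinearity patterns, including those whose constraints share points, is the step I expect to be the main obstacle. It is also exactly where the hypothesis $k \geq 4$ enters: when $k = 3$ the three adjacent conditions $\ell_1 = \ell_2$, $\ell_2 = \ell_3$, $\ell_3 = \ell_1$ all collapse to the single statement that $P_1, P_2, P_3$ are collinear, so they fail to supply $k$ independent subleading terms and the clean cancellation breaks; for $k \geq 4$ the consecutive triples $\{P_i, P_{i+1}, P_{i+2}\}$ are genuinely distinct and contribute independently at order $n^{2k-1}$.

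Finally, as a consistency check I would compare with the exact closed-walk count recorded above. Since each $2k$-cycle yields $4k$ closed walks of length $2k$, the identity $4k\, c_{2k}(\Gamma_{\Pi}) = \mathrm{Trace}(A^{2k}) - W$, where $W$ is the number of non-cycle closed walks, together with $\mathrm{Trace}(A^{2k}) = 2(n+1)^{2k} + 2(N-1)n^k = 2n^{2k} + 4k\,n^{2k-1} + O(n^{2k-2})$ for $k \geq 4$, forces $W = 4k\,n^{2k-1} + O(n^{2k-2})$, matching the expectation that walks with a single backtrack dominate. This agreement corroborates both the leading coefficient $\frac{1}{2k}$ and the absence of an $n^{2k-1}$ term in the count.
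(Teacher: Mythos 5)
Your argument is correct, and it reaches the theorem by a genuinely different route at the key step. Both you and the paper start from $c_{2k}(\Gamma_{\Pi})=\frac{1}{2k}\bigl(N_{(k)}-(\text{degenerate tuples})\bigr)$ and both identify the tuples with an adjacent coincidence ($P_i,P_{i+1},P_{i+2}$ collinear) as the only degenerate class of size $\Theta(n^{2k-1})$; your one-line justification of the factor $2k$ is the content of the paper's Lemma 2 and is sound, since you only ever divide the count of honest $k$-gons by their stabilizer. The difference is in how that dominant correction is evaluated. The paper does not compute it directly: it sandwiches the class $A_k(\Gamma_{\Pi})$ between $(n-k+2)(k-1)c_{2k-2}(\Gamma_{\Pi})$ and $(n-1)(k-1)c_{2k-2}(\Gamma_{\Pi})$ by viewing each such configuration as a $(2k-2)$-cycle with a pendant point (Lemma 4), so extracting the coefficient $\tfrac{1}{2}n^{2k-1}$ requires asymptotic information about $c_{2k-2}$ and the argument becomes a two-sided recursion in $k$; it also needs Lemma 3 to control the symmetry groups of degenerate quasi-$k$-gons. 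Your inclusion--exclusion replaces this with the direct enumeration (choose the line, three ordered points on it, extend freely), giving $n^{2k-1}+O(n^{2k-2})$ for each of the $k$ adjacent events, with no recursion and no need for Lemma 3. The price is the uniform $O(n^{2k-2})$ bound over all remaining coincidence patterns, which you correctly flag as the real work; it is exactly what the paper's Lemmas 5 and 6 supply via line-sequence counting, and your degrees-of-freedom argument is a valid substitute because the number of patterns is a constant depending only on $k$. Your explanation of where $k\geq 4$ enters (for $k=3$ the three adjacent events coincide, so the cancellation fails, matching $a_{2k-1}=1/3$ in the paper's table) is also accurate.
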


\begin{theorem}
Let $k \geq 4$, then
$$
\text{ex}(v, C_{2k}, \mathcal{C}_{\text{odd}} \cup \{ C_4 \}) = \left(\frac{1}{2^{k+1}k}-o(1)\right)v^k, \hspace{1cm} v \rightarrow \infty.
$$
\end{theorem}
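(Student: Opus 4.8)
The plan is to prove matching asymptotic lower and upper bounds: the lower bound comes from the projective-plane construction together with Theorem 1, and the upper bound from a direct counting argument valid for every bipartite graph of girth at least $6$.

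For the lower bound I would start from the Levi graph $\Gamma_\Pi$ of the Desarguesian plane $PG(2,q)$ of prime-power order $q$, which is bipartite of girth $6$ and hence $(\mathcal{C}_{\text{odd}}\cup\{C_4\})$-free. It has $v_0 = 2(q^2+q+1)$ vertices and, by Theorem 1, exactly $c_{2k}(\Gamma_\Pi) = \frac{1}{2k}q^{2k}+O(q^{2k-2})$ copies of $C_{2k}$. Given a target $v$, I would choose $q$ to be the largest prime with $2(q^2+q+1)\le v$ and pad $\Gamma_\Pi$ with $v-v_0$ isolated vertices; this changes neither the girth nor the number of $2k$-cycles and produces a graph on exactly $v$ vertices. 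Since $v\sim 2q^2$ forces $q^{2k}=(v/2)^k(1-o(1))$, this yields $\text{ex}(v,C_{2k},\mathcal{C}_{\text{odd}}\cup\{C_4\})\ge (\frac{1}{2^{k+1}k}-o(1))v^k$. The only delicate point here is that projective planes exist only for special orders, so I must cover all large $v$: the prime number theorem (in the weak form $p_{i+1}/p_i\to 1$) guarantees a prime $q$ with $q=\sqrt{v/2}\,(1-o(1))$ and $2(q^2+q+1)\le v$, which is exactly what the $o(1)$ absorbs.

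For the upper bound I would show the same constant is an exact ceiling. Let $G$ be any bipartite graph of girth at least $6$ with parts of sizes $a$ and $b$, $a+b=v$, and let $\mathcal{G}$ be the collinearity graph on the $a$-side, in which two vertices are adjacent iff they have a common neighbour in $G$. Because $G$ is $C_4$-free, any two vertices have at most one common neighbour, so $\mathcal{G}$ is simple and, crucially, the \emph{connecting} vertex joining two adjacent vertices of $\mathcal{G}$ is uniquely determined. Reading off the $k$ $a$-side vertices of a $2k$-cycle in cyclic order therefore gives an injection from the set of (cycle, start vertex, direction) triples into the closed walks of length $k$ in $\mathcal{G}$; since each $2k$-cycle yields $2k$ such triples, $2k\,c_{2k}(G)\le \text{Tr}(B^k)$, where $B$ is the adjacency matrix of $\mathcal{G}$.

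It then remains to bound $\text{Tr}(B^k)$. By monotonicity of the $\ell^p$-norms of the (real) eigenvalue vector of $B$, one has $\text{Tr}(B^k)\le\big(\text{Tr}(B^2)\big)^{k/2}$ for every $k\ge 2$, while $\text{Tr}(B^2)=2\sum_{\ell}\binom{d(\ell)}{2}\le 2\binom{a}{2}\le a^2$, the equality counting collinear pairs and the inequality using that each such pair arises from a single common neighbour. Running the identical argument on the $b$-side gives $\text{Tr}(B^k)\le b^k$ as well, so $c_{2k}(G)\le\frac{1}{2k}\min(a,b)^k\le\frac{1}{2k}(v/2)^k=\frac{v^k}{2^{k+1}k}$, and combining the two directions proves the theorem. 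I expect the main obstacle to be the upper bound's reduction step: one must verify that the passage to closed $k$-walks in $\mathcal{G}$ really is injective (this is where $C_4$-freeness is essential) and that taking the smaller of the point-side and line-side estimates is what recovers the factor $2^{-k}$, since without this symmetrisation the crude inequality $\min(a,b)\le v/2$ is the only place the balance of the extremal plane enters.
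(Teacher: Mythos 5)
Your proposal is correct, and its overall architecture is the same as the paper's: a lower bound from Levi graphs of planes of prime(-power) order via Theorem 1 plus a prime-density argument and padding by isolated vertices, and an upper bound valid for every bipartite $C_4$-free graph obtained by projecting each $2k$-cycle onto the smaller side. The only genuine divergence is in how you finish the upper bound. The paper observes that, by $C_4$-freeness, each $2k$-cycle of $G$ induces a unique $k$-cycle in the square graph $G^2$ on the smaller part $A$, and then simply bounds the number of $k$-cycles in the simple graph $G^2$ by the number in the complete graph on $|A|$ vertices, namely $\frac{1}{2k}|A|_{(k)} \le \frac{1}{2k}\left(\frac{v}{2}\right)_{(k)}$. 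You instead inject rooted, directed $2k$-cycles into closed $k$-walks of the collinearity graph and bound $\mathrm{Trace}(B^k) \le \left(\mathrm{Trace}(B^2)\right)^{k/2} \le a^k$ via monotonicity of $\ell^p$-norms of the eigenvalue vector. This is valid --- your injectivity step and the identity $\mathrm{Trace}(B^2) = 2\sum_{\ell}\binom{d(\ell)}{2} \le a(a-1)$ both correctly exploit $C_4$-freeness --- but the spectral machinery is overkill: the number of closed $k$-walks in any simple graph on $a$ vertices is trivially at most $a(a-1)^{k-1}$, and the paper's direct cycle count gives the slightly sharper falling-factorial bound. Two small slips worth fixing: your inequality ``$\mathrm{Trace}(B^k)\le b^k$'' should refer to the adjacency matrix of the $b$-side collinearity graph, not $B$ again; and the symmetrization over both sides is unnecessary, since running the argument once on the smaller part already gives $\min(a,b)\le v/2$. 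Finally, on the lower bound, your appeal to the prime number theorem in the form $p_{i+1}/p_i \to 1$ does suffice; the paper cites the stronger Baker--Harman--Pintz prime-gap theorem, but only the weak form is needed to absorb the error into the $o(1)$.
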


The structure of our paper is as follows. In section 2 we state some definitions and prove some important lemmas. In section 3 we begin to place bounds on the number of certain types of subgraphs in $\Gamma_{\Pi}$. In section 4 we prove Theorems 1 and 2 mentioned above. Finally, in our concluding remarks, we give a table of coefficient data on the number of $2k$-cycles in $\Gamma_{\Pi}$ for small $k$ and leave the reader with a conjecture.

\newpage
 
\section{Symmetries of $QG_k$}
Let $\Pi$ be a projective plane and $\Gamma_{\Pi}$ be its Levi graph. For $k \geq 3$, we define a \textit{quasi $k$-gon} to be a sequence $QG_k = (P_1, P_2, \dots, P_k)$ of $k$ distinct points of $\Pi$. Here, $P_k$ and $P_1$ are thought of as consecutive elements in $QG_k$. All arithmetic done in the indices is considered to be modulo $k$, where we will use $\{ 1, 2, \dots k \}$ as the representatives of each equivalence class mod $k$. We call $\mathcal{P}_{QG_k} = \{ P_1, \dots, P_k  \}$ the set of points of $QG_k$ and $\mathcal{L}_{QG_k}$ is the set of all distinct lines amongst $P_1P_2, P_2P_3, \dots, P_kP_1$. For convenience, we will write  $\mathcal{L}_{QG_k} = \{ P_iP_{i+1} : 1 \leq i \leq k \}$. If $|\mathcal{L}_{QG_k}| = k$, meaning all lines of the form $P_iP_{i+1}$ for $1 \leq i \leq k$ are distinct, then we call $QG_k$ a \textit{$k$-gon} and instead denote it by $G_k$. We will denote the number of $k$-gons in a projective plane $\Pi$ by $c_k(\Pi)$. It will be shown later that in fact
$$
\frac{1}{2k}c_{k}(\Pi) = c_{2k}(\Gamma_{\Pi}).
$$


Define the subgraph $\Gamma_{QG_k}$ of $\Gamma_{\Pi}$ corresponding to $QG_k$ as follows: The set of vertices $V(\Gamma_{QG_k})$ is given by $\mathcal{P}_{QG_k} \cup \mathcal{L}_{QG_k}$. The edges $E(\Gamma_{QG_k})$ are obtained by joining a vertex $P_i$ to all vertices in the set $\{ P_{i-1}P_i, P_iP_{i+1} \}$ for $1 \leq i \leq k$. If $P_{i-1}P_i = P_iP_{i+1}$ then $P_i$ has only one neighbor. It is clear that in the case where $QG_k$ is actually a $k$-gon, the corresponding graph $\Gamma_{QG_k}$ is a cycle of length $2k$.

Here we provide an example. Let $QG_7 = (P_1, P_2, \dots, P_7)$ be a quasi 7-gon given by the following figure. We use $QG_7$ to demonstrate the corresponding graph $\Gamma_{QG_7}$.
\begin{center}
\begin{tikzpicture}
    \draw [thick, draw = black]
        (0, -0.5) -- (6, -0.5)
        (1, 5.2) -- (1, -1.2)
        (5, 5.2) -- (5, -1.2)
        (2.65, -1.2) -- (5.35, 5.2)
        (3.35, -1.2) -- (0.65, 5.2);

    \node (P5) [circle, fill = white, draw] at (1, 4.5) {$P_5$};
    \node (P3) [circle, fill = white, draw] at (5, 4.5) {$P_3$};
    \node (P1) [circle, fill = white, draw] at (1, -0.5) {$P_1$};
    \node (P4) [circle, fill = white, draw] at (3, -0.5) {$P_4$};
    \node (P2) [circle, fill = white, draw] at (5, -0.5) {$P_2$};
    
    \node (QG) [] at (3, 6) {$QG_7$};
    \node (G) [] at (11.5, 6) {$\Gamma_{QG_7}$};
    
        
    \node (P6) [circle, fill = white, draw] at (1, 2.8) {$P_6$};
    \node (P7) [circle, fill = white, draw] at (1, 1.1) {$P_7$};
    
    \node (P1') [circle, fill = white, draw] at (9, -1) {$P_1$};
    \node (P2') [circle, fill = white, draw] at (9, 0) {$P_2$};
    \node (P3') [circle, fill = white, draw] at (9, 1) {$P_3$};
    \node (P4') [circle, fill = white, draw] at (9, 2) {$P_4$};
    \node (P5') [circle, fill = white, draw] at (9, 3) {$P_5$};
    \node (P6') [circle, fill = white, draw] at (9, 4) {$P_6$};
    \node (P7') [circle, fill = white, draw] at (9, 5) {$P_7$};
    
    \node (L1) [circle, fill = white, draw] at (14, -0.8) {$P_1P_2$};
    \node (L2) [circle, fill = white, draw] at (14, 0.6) {$P_2P_3$};
    \node (L3) [circle, fill = white, draw] at (14, 2.0) {$P_3P_4$};
    \node (L4) [circle, fill = white, draw] at (14, 3.4) {$P_4P_5$};
    \node (L5) [circle, fill = white, draw] at (14, 4.8) {$P_5P_6$};
    
      \draw [ thick, draw=black] 
        (P1') -- (L1)
        (P1') -- (L5);
     \draw [ thick, draw=black] 
        (P2') -- (L1)
        (P2') -- (L2);
    \draw [ thick, draw=black] 
        (P3') -- (L2)
        (P3') -- (L3);
    \draw [ thick, draw=black] 
        (P4') -- (L3)
        (P4') -- (L4);
     \draw [ thick, draw=black] 
        (P5') -- (L4)
        (P5') -- (L5);
     \draw [ thick, draw=black] 
        (P6') -- (L5)
        (P7') -- (L5);
    
\end{tikzpicture}
\end{center}
Let us take a moment to comment on the above figure. Here $QG_7 = (P_1, P_2, \dots, P_7)$, with the corresponding set of lines $\{ P_iP_{i+1} : 1\leq i \leq k \}$. We assume that $P_1P_2, P_2P_3, P_3P_4$, $P_4P_5, P_5P_6$ are all distinct lines. Observe that $P_4$ lies on the line $P_1P_2$, however,  $P_4 \not\sim P_1P_2$ in $\Gamma_{QG_7}$. By definition of $\Gamma_{QG_7}$, we have only that $P_4 \sim P_3P_4$ and $P_4 \sim P_4P_5$. Furthermore, note that $P_5P_6 = P_6P_7 = P_7P_1$ and therefore $P_6$ and $P_7$ each only have one neighbor, namely $P_5P_6$.

The symmetric group $S_k$ acts on quasi $k$-gons in $\Pi$ in the following way: If $QG_k = (P_1, \dots, P_k)$ and $\sigma \in S_k$, then $\sigma(QG_k) := (P_{\sigma(1)}, \dots, P_{\sigma(k)})$. Hence,
$$
\mathcal{P}_{\sigma(QG_k)} = \{ P_{\sigma(1)}, \dots, P_{\sigma(k)}  \} = \{ P_1, \dots, P_k \} = \mathcal{P}_{QG_k}
$$
and the set of lines of $\sigma(QG_k)$ is
$$
\mathcal{L}_{\sigma(QG_k)} =  \{  P_{\sigma(i)}P_{\sigma(i+1)} : 1 \leq i \leq k \}.
$$
Note that in general, $\mathcal{L}_{QG_k}$ is not necessarily equal to $\mathcal{L}_{\sigma(QG_k)}$. 

We call two quasi $k$-gons $QG_k = (P_1, \dots, P_k)$ and $QG_k' = (P_1', \dots, P_k')$ \textit{equivalent}, and write $QG_k \equiv QG_k'$, if $\Gamma_{QG_k} =\Gamma_{QG_k'}$, that is, they have the same vertex set and the same edge set. It is obvious that equivalence of quasi $k$-gons is an equivalence relation and if $QG_k \equiv QG_k'$, then there exists $\sigma \in S_k$ such that $\sigma(QG_k) = QG_k'$. Therefore $S(QG_k) := \{ \sigma \in S_k: QG_k \equiv \sigma(QG_k)  \}$ is a subgroup of $S_k$.

\bigskip

\noindent\textbf{Remark:} Given a quasi $k$-gon $QG_k$ and permutation $\sigma \in S_k$ we stress the following point: We do not view $QG_k$ as a partial plane in $\Pi$ defined by the points and lines of $QG_k$. Therefore, if $QG_k \equiv \sigma(QG_k)$, then $\sigma$ should not be thought of as a collineation. As an example, we refer to the figure above of $QG_7$ and consider $\sigma = (1234567)$. In the lemma that follows, we demonstrate that $QG_7 \equiv \sigma(QG_7)$, however, observe that while $P_5, P_6, P_7, P_1$ lie on one line in $\Pi$, $P_{\sigma(5)} = P_6$, $P_{\sigma(6)} = P_7$, $P_{\sigma(7)} = P_1$ and $P_{\sigma(1)} = P_2$ are not collinear in $\Pi$.

Let $D_k$ denote the dihedral group of order $2k$, which is defined as the group of automorphisms of the graph $C_k$. It is well known that $D_k = \langle a, b\rangle$ where $a^n = b^2 = (ab)^2 = 1$.






\begin{lemma}
Let $\Pi$ be a projective plane and $QG_k = (P_1, \dots, P_k)$ in $\Pi$. Then $S(QG_k)$ contains  $D_k$ as a subgroup.
\end{lemma}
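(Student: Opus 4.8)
The plan is to lean on the fact, already established just above the statement, that $S(QG_k)$ is a subgroup of $S_k$. Since $D_k = \langle a, b \rangle$ is generated by the rotation $a$ and a reflection $b$, it suffices to produce two concrete permutations realizing $a$ and $b$ and to check that each lies in $S(QG_k)$; closure under the subgroup operation then forces all of $D_k \subseteq S(QG_k)$, whence $D_k$ is a subgroup of $S(QG_k)$. Concretely I would take $a$ to be the $k$-cycle $i \mapsto i+1$ and $b$ the reflection $i \mapsto k+1-i$ (indices mod $k$), and verify that $\Gamma_{a(QG_k)} = \Gamma_{QG_k}$ and $\Gamma_{b(QG_k)} = \Gamma_{QG_k}$.

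The single observation driving both verifications is that $a$ and $b$ send consecutive index pairs to consecutive index pairs: one has $\{a(i), a(i+1)\} = \{i+1, i+2\}$ and $\{b(i), b(i+1)\} = \{k+1-i, k-i\}$, each again of the form $\{j, j+1\}$. (This is exactly the statement that $a, b \in \aut(C_k)$.) From this I would first match the vertex sets. The point sets coincide automatically, since $\mathcal{P}_{\sigma(QG_k)} = \mathcal{P}_{QG_k}$ for every $\sigma \in S_k$. For the line sets, as $i$ runs over $\{1, \dots, k\}$ the pair $\{\sigma(i), \sigma(i+1)\}$ runs over all consecutive pairs $\{j, j+1\}$, so using the symmetry $P_aP_b = P_bP_a$ of the line notation I obtain $\mathcal{L}_{\sigma(QG_k)} = \{P_{\sigma(i)}P_{\sigma(i+1)}\} = \{P_jP_{j+1}\} = \mathcal{L}_{QG_k}$ for $\sigma \in \{a, b\}$.

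Next I would match the edge sets. In $\Gamma_{\sigma(QG_k)}$ the vertex $P_{\sigma(i)}$ is joined precisely to $P_{\sigma(i-1)}P_{\sigma(i)}$ and $P_{\sigma(i)}P_{\sigma(i+1)}$. Because $\sigma \in \{a, b\}$ sends $i-1, i, i+1$ to a consecutive triple, the indices $\sigma(i-1)$ and $\sigma(i+1)$ are exactly the two cyclic neighbors $m-1$ and $m+1$ of $m := \sigma(i)$. Hence the pair of lines incident to $P_m$ in $\Gamma_{\sigma(QG_k)}$ is $\{P_{m-1}P_m, P_mP_{m+1}\}$, which is precisely the pair of lines incident to $P_m$ in $\Gamma_{QG_k}$. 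Letting $m$ range over $\{1, \dots, k\}$ shows the two edge sets coincide, giving $\sigma(QG_k) \equiv QG_k$.

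The step I expect to require the most care is the degenerate configurations, where some of the lines $P_iP_{i+1}$ coincide (for instance, three consecutive collinear points give $P_{m-1}P_m = P_mP_{m+1}$, so $P_m$ has only one neighbor). Such collapses shrink both the line set and the incidences, but the argument is built to survive them: the unordered neighbor set $\{P_{m-1}P_m, P_mP_{m+1}\}$ of $P_m$ collapses to exactly the same single line in $\Gamma_{\sigma(QG_k)}$ as in $\Gamma_{QG_k}$, so the computed vertex and edge sets still agree identically. To keep the verifications for $a$ and $b$ uniform, I would record the incidence description of $\Gamma_{QG_k}$ once at the outset in this unordered, collapse-tolerant form, so that no separate case analysis for coincident lines is needed.
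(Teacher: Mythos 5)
Your proposal is correct and takes essentially the same approach as the paper: both exhibit the rotation $i \mapsto i+1$ and the reflection $i \mapsto k+1-i$, verify directly that each preserves $\Gamma_{QG_k}$, and conclude by the subgroup property. The only cosmetic differences are that the paper additionally verifies the relation $(\sigma\rho)^2 = 1$ to confirm the generated group is dihedral of order $2k$, while you instead spend that effort making the degenerate coincident-line cases explicit.
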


\begin{proof}
Let $\sigma = (12\dots k)$, so that $\sigma(QG_k) = (P_{\sigma(1)}, \dots, P_{\sigma(k)}) = (P_2, \dots, P_k, P_1)$. We wish to show that $\sigma(QG_k) \equiv QG_k$. Note that the vertex set $V(\Gamma_{\sigma(QG_k)}) =  \mathcal{P}_{\sigma(QG_k)} \cup \mathcal{L}_{\sigma(QG_k)} = \mathcal{P}_{QG_k} \cup \mathcal{L}_{\sigma(QG_k)}$. Here
\begin{eqnarray*}
\mathcal{L}_{\sigma(QG_k)}&=& \{  P_{\sigma(i)}P_{\sigma(i+1)} : 1 \leq i \leq k \} =  \{  P_{i+1}P_{i+2} : 1 \leq i \leq k \} \\
&=& \{  P_{i}P_{i+1} : 1 \leq i \leq k \} = \mathcal{L}_{QG_k}.
\end{eqnarray*}
Thus we have $V(\Gamma_{\sigma(QG_k)}) = V(\Gamma_{QG_k})$. The edge set $E(\Gamma_{\sigma(QG_k)})$ is given by joining $P_{\sigma(i)} = P_{i+1}$ to all distinct lines in $\{ P_{\sigma(i-1)}P_{\sigma(i)},  P_{\sigma(i)}P_{\sigma(i+1)} \} = \{ P_{i}P_{i+1}, P_{i+1}P_{i+2} \}$ where $1 \leq i \leq k$. These are exactly the same edges that appear in $\Gamma_{QG_k}$. Thus, $\sigma \in S(QG_k)$ and has order $k$.

Now consider the permutation $\rho$ of $\{ 1, 2, \dots, k \}$, such that $\rho(i) = k+1-i$. That is, $\rho(QG_k) = (P_k, P_{k-1}, \dots, P_1)$. Set $j = k+1 - i$, then $\rho(i) = j$ and $\rho(i+1) = j -1$.
Observe that $V(\Gamma_{\rho(QG_k)}) = \mathcal{P}_{QG_k}\cup \mathcal{L}_{\rho(QG_k)}$ where
\begin{eqnarray*}
\mathcal{L}_{\rho(QG_k)} &=& \{  P_{\rho(i)}P_{\rho(i+1)} : 1 \leq i \leq k \} =  \{  P_{j}P_{j-1} : 1 \leq j \leq k \} \\
&=& \{  P_{j-1}P_{j} : 1 \leq j \leq k \} = \mathcal{L}_{QG_k}.
\end{eqnarray*}
Therefore $V(\Gamma_{\rho(QG_k)}) = V(\Gamma_{QG_k})$. The edge set $E(\Gamma_{\rho(QG_k)})$ is given by joining $P_{\rho(i)} = P_{j}$ to all distinct lines in $\{ P_{\rho(i-1)}P_{\rho(i)},  P_{\rho(i)}P_{\rho(i+1)} \} = \{ P_{j+1}P_{j}, P_{j}P_{j-1} \}$ for $1 \leq j \leq k$. These are exactly the same edges that appear in $\Gamma_{QG_k}$. Thus, $\rho \in S(QG_k)$  and has order $2$.

Now to demonstrate that $D_k$ is in fact the Dihedral group, we show that $(\sigma \rho )^2 = 1$. Since $\sigma(i) = i+1$ and $\rho(i) = k+1 - i$ then $(\sigma\rho)(i) = \sigma(k+1 - i) = k+2 - i$. Therefore 
$$
(\sigma\rho)^2(i) = (\sigma\rho)(k+2 - i) = k+2 - (k+2 - i) = i.
$$
Therefore $\sigma^k = \rho^2 = (\sigma\rho)^2 = 1$, and so $D_k$ is a Dihedral group of order $2k$.
\end{proof}

\noindent We now provide an example showing that it is possible to have $S(QG_k)$ be strictly larger than $D_k$. We refer to our previous example of $QG_7$, and we consider the permutation $\sigma = (67)$.
\begin{center}
\begin{tikzpicture}
    \draw [thick, draw = black]
        (0, -0.5) -- (6, -0.5)
        (1, 5.2) -- (1, -1.2)
        (5, 5.2) -- (5, -1.2)
        (2.65, -1.2) -- (5.35, 5.2)
        (3.35, -1.2) -- (0.65, 5.2);

    \node (P5) [circle, fill = white, draw] at (1, 4.5) {$P_5$};
    \node (P3) [circle, fill = white, draw] at (5, 4.5) {$P_3$};
    \node (P1) [circle, fill = white, draw] at (1, -0.5) {$P_1$};
    \node (P4) [circle, fill = white, draw] at (3, -0.5) {$P_4$};
    \node (P2) [circle, fill = white, draw] at (5, -0.5) {$P_2$};
    
    \node (QG) [] at (3, 6) {$QG_7$};
    \node (G) [] at (11, 6) {$\sigma(QG_7)$};
        
    \node (P6) [circle, fill = white, draw] at (1, 2.8) {$P_6$};
    \node (P7) [circle, fill = white, draw] at (1, 1.1) {$P_7$};
    
    \draw [thick, draw = black]
        (8, -0.5) -- (14, -0.5)
        (9, 5.2) -- (9, -1.2)
        (13, 5.2) -- (13, -1.2)
        (10.65, -1.2) -- (13.35, 5.2)
        (11.35, -1.2) -- (8.65, 5.2);

    \node (P5) [circle, fill = white, draw] at (9, 4.5) {$P_5$};
    \node (P3) [circle, fill = white, draw] at (13, 4.5) {$P_3$};
    \node (P1) [circle, fill = white, draw] at (9, -0.5) {$P_1$};
    \node (P4) [circle, fill = white, draw] at (11, -0.5) {$P_4$};
    \node (P2) [circle, fill = white, draw] at (13, -0.5) {$P_2$};
        
    \node (P6) [circle, fill = white, draw] at (9, 2.8) {$P_7$};
    \node (P7) [circle, fill = white, draw] at (9, 1.1) {$P_6$};
    
\end{tikzpicture}
\end{center}
The reader should convince themselves that both $QG_7 = (P_1, P_2, P_3, P_4, P_5, P_6, P_7)$ and $\sigma(QG_7) = (P_1, P_2, P_3, P_4, P_5, P_7, P_6)$ have the same corresponding graph, namely the graph $\Gamma_{QG_7}$ which we have drawn in the previous figure.

\begin{lemma}
Let $\Pi$ be a projective plane, and $G_k = (P_1, \dots, P_k)$ be a $k$-gon in $\Pi$. Then the group of symmetries of $G_k$ is precisely $D_k$.
\end{lemma}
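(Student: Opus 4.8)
The plan is to prove the two inclusions separately. Lemma 1 already gives $D_k \subseteq S(G_k)$, since a $k$-gon is in particular a quasi $k$-gon, so the entire content of the statement is the reverse inclusion $S(G_k) \subseteq D_k$. The crucial feature of the $k$-gon case, which I will exploit throughout, is that $\Gamma_{G_k}$ is \emph{exactly} the cycle $C_{2k}$: because all $k$ lines $P_iP_{i+1}$ are distinct, every line vertex has degree $2$, and the graph is a single $2k$-cycle in which points and lines alternate.

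Next I would extract from $\Gamma_{G_k}$ a purely combinatorial invariant that records the cyclic order of the points, namely the ``point cycle'': the graph on vertex set $\{P_1, \dots, P_k\}$ in which two points are joined whenever they have a common line-neighbor in $\Gamma_{G_k}$. In a $2k$-cycle two vertices share a (unique) common neighbor precisely when they lie at distance $2$, so the point cycle is exactly the $k$-cycle $P_1 - P_2 - \cdots - P_k - P_1$, with edge set $\{ \{P_i, P_{i+1}\} : 1 \le i \le k \}$. The key point is that this construction depends only on $\Gamma_{G_k}$ as a labeled graph, not on the ordered tuple used to build it.

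Then I take any $\sigma \in S(G_k)$, so that $\Gamma_{\sigma(G_k)} = \Gamma_{G_k}$. Since the two graphs are literally equal, their point cycles coincide. Computing the point cycle from the (equal) graph yields edge set $\{ \{P_i, P_{i+1}\} \}$ as above; computing it instead from the quasi $k$-gon structure of $\sigma(G_k) = (P_{\sigma(1)}, \dots, P_{\sigma(k)})$ yields edge set $\{ \{P_{\sigma(i)}, P_{\sigma(i+1)}\} : 1 \le i \le k \}$. Here I first observe that $\Gamma_{\sigma(G_k)} = C_{2k}$ forces the lines of $\sigma(G_k)$ to be distinct, so $\sigma(G_k)$ is again a genuine $k$-gon and its point cycle is clean. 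Equating the two edge sets shows that $\sigma$, viewed as a permutation of the index set $\{1, \dots, k\}$, carries the cycle $1 - 2 - \cdots - k - 1$ to itself, i.e. $\sigma$ is an automorphism of the abstract $k$-cycle $C_k$. Since $\aut(C_k) = D_k$, we conclude $\sigma \in D_k$, giving $S(G_k) \subseteq D_k$ and hence $S(G_k) = D_k$.

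The step I expect to require the most care is making the ``two computations of the same point cycle'' argument rigorous: I must state cleanly that the common-neighbor relation among points is an intrinsic invariant of the graph (so that equal graphs force equal point cycles), while simultaneously identifying that invariant with the cyclic adjacency coming from the ordered tuple $\sigma(G_k)$. This hinges entirely on the $k$-gon hypothesis through the observation that each line vertex of $C_{2k}$ has degree exactly $2$ and therefore determines a unique unordered pair of consecutive points; the verification that no ``accidental'' common neighbors arise—which is exactly what can fail for a general quasi $k$-gon, where distinct lines $P_iP_{i+1}$ may coincide—is the part of the argument that must be stated with precision.
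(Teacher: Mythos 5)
Your proof is correct, and it reaches the conclusion by a genuinely different route than the paper. The paper argues by contraposition with an explicit case analysis: for $\tau \in S_k \setminus D_k$ it picks an index $i$ with $\tau(i+1) \neq \tau(i) \pm 1$, observes that the line $P_{\tau(i)}P_{\tau(i+1)}$ would have to coincide with one of the (at most four) lines of $G_k$ through $P_{\tau(i)}$ or $P_{\tau(i+1)}$, and rules out each of the four identifications using the distinctness of the $k$ lines of a $k$-gon; it separately verifies by hand that any permutation satisfying $\tau(i+1) = \tau(i) \pm 1$ for all $i$ lies in $D_k$. You instead package the same distinctness hypothesis into a graph-intrinsic invariant --- the ``common line-neighbor'' relation on the point side of $\Gamma_{G_k}$, which for a genuine $2k$-cycle with $k \geq 3$ is exactly the distance-$2$ relation and hence recovers the cyclic order of the points --- and then read off $S(G_k) \subseteq \aut(C_k) = D_k$ directly from the paper's own definition of $D_k$. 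This eliminates the four-case analysis and the separate verification step, at the cost of having to argue carefully (as you correctly flag) that the invariant computed from the equal graphs agrees with the cyclic adjacency of each tuple; your observation that $\Gamma_{\sigma(G_k)} = \Gamma_{G_k}$ having $2k$ vertices forces $\sigma(G_k)$ to be a genuine $k$-gon is the right way to close that loop. One small point worth making explicit in a final write-up: the claim that distance-$2$ pairs in $C_{2k}$ have a \emph{unique} common neighbor and all other point pairs have none uses $2k \geq 6$, which is guaranteed by $k \geq 3$.
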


\begin{proof}
In the following proof, all arithmetic is assumed to be taken modulo $k$. Let $D_k$ represent the subgroup of $S(G_k)$ described above. Suppose $\tau \in S_k$ such that for each $i$, $1 \leq i \leq k$, $\tau(i+1) = \tau(i)+1$ or $\tau(i+1) = \tau(i)- 1$. Then in fact, exactly one of the following must be true
\begin{itemize}
    \item  $\tau(i+1) = \tau(i) +1$ for all $1 \leq i \leq k$,
    \item  $\tau(i+1) = \tau(i) -1$ for all $1 \leq i \leq k$.
\end{itemize}
Indeed, if this was not the case, then there would exist a $j$ such that $\tau(j+1) = \tau(j) +1$ and $\tau(j+2) = \tau(j+1) - 1 = \tau(j) +1 - 1 = \tau(j)$. Clearly this is a contradiction, as $\tau(j+2) = \tau(j)$ implies $\tau$ is not a bijection. Note that by definition of $\sigma$ and $\rho$ in Lemma 1, any permutation satisfying either condition above must in fact be an element of $D_k$.

Suppose then that $\tau \in S_k \setminus D_k$, which implies that there exists an $i$, $1 \leq i \leq k$ for which $\tau(i + 1) \neq \tau(i) \pm 1$. For this $i$, let $\tau(i) = \ell$ and $\tau(i+1) = j$. Therefore in $\Gamma_{\tau(G_k)}$ we have $P_{\ell} \sim P_{\ell}P_j \sim P_j$. In $\Gamma_{G_k}$ we know that 
$$
P_{\ell - 1}P_{\ell} \sim P_{\ell} \sim P_{\ell}P_{\ell + 1} \hspace{1cm} \text{and} \hspace{1cm} P_{j-1}P_j \sim P_j \sim P_{j}P_{j+1}.
$$

If $\tau(G_k) \equiv G_k$, then we must have that $P_{\ell}P_j = P_{\ell - 1}P_{\ell}$ or $P_{\ell}P_j = P_{\ell}P_{\ell + 1}$ and $P_{\ell}P_j = P_{j-1}P_j$ or $P_{\ell}P_j = P_jP_{j+1}$. So we are left with four possibilities:
\begin{enumerate}
    \item $P_{\ell - 1}P_{\ell} = P_{j-1}P_j$.
    \item $P_{\ell}P_{\ell+1} = P_{j}P_{j+1}$.
    \item $P_{\ell - 1}P_{\ell} = P_{j}P_{j+1}$.
    \item $P_{\ell}P_{\ell + 1} = P_{j-1}P_j$.
\end{enumerate}

Each leads us to a contradiction, because all the lines of a $k$-gon are distinct. Cases 1 and 2 imply $\ell = j$, a contradiction. Case 3 implies $\ell - 1 = j$ and case 4 implies $\ell = j-1$, both are contradictions since we assumed $j \neq \ell \pm 1$. Thus, if $\tau \in S_k \setminus D_k$, then $G_k \not\equiv \tau(G_k)$, and so $S(G_k) = D_k$.
\end{proof}

\noindent \textbf{Remark:} Since the order of $S(G_k)$ is precisely $2k$ for any $k$-gon, then this demonstrates the assertion we made in the beginning of this section that 
$$
\frac{1}{2k}c_k(\Pi) = c_{2k}(\Gamma_{\Pi}).
$$




For positive integers $x$ and $k$, let $x_{(k)} = x(x-1)\cdots(x - k+1)$. Let $Q_k$ be the collection of all quasi $k$-gons in $\Pi$ of order $n$. Clearly, we have that $|Q_k| = N_{(k)}$ where $N = n^2 + n + 1$. Define $Q_{k, j} =  Q_{k, j}(\Pi) = \{ QG_k \in Q_k: |\mathcal{L}_{QG_k}| = j \}$. The the sets $Q_{k, j}$ form a partition of $Q_k$ and therefore 
\begin{equation}
|Q_k| = |Q_{k, k}| + |Q_{k, k-1}| + \cdots + |Q_{k, 1}|
\end{equation}
where in fact $|Q_{k, k}| = c_k(\Pi)$. Given this fact and the remark above, we may re-write $(1)$ in the form 
\begin{equation}
    c_{2k}(\Gamma_{\Pi}) = \frac{1}{2k}(|Q_k| - |Q_{k, k-1}| -  \cdots - |Q_{k, 1}|).
\end{equation}
Our end goal is to obtain equality of the first and second terms in the lower and upper bounds of $c_{2k}(\Gamma_{\Pi})$. We aim to do this by use of $(2)$, combined with bounds which we will obtain for $|Q_{k, j}|$ where $j < k$. We consider several cases, and obtain bounds in each case independently. We count:
\begin{enumerate}[(a)]
    \item The number of $QG_k$'s with $|\mathcal{L}_{QG_k}| = k-1$ and such that there exists an $m$ such that $P_mP_{m+1} = P_{m+1}P_{m+2}$. Let the set of all such quasi $k$-gons in $\Pi$ be denoted by $A_k(\Pi)$.
    \item The number of $QG_k$'s with $|\mathcal{L}_{QG_k}| = k-1$ and such that for all $m$, $P_mP_{m+1} \neq P_{m+1}P_{m+2}$. Let the set of all such quasi $k$-gons be denoted by $B_k(\Pi)$.
    \item The order of $Q_{k, j}$ for each $1 \leq j \leq k-2$.
\end{enumerate}


\begin{lemma}
Let $k \geq 4$ and $QG_k=(P_1, \dots, P_k)$ be a quasi $k$-gon with $|\mathcal{L}_{QG_k}| = k-1$. Suppose further that there exists an $m$ such that $P_mP_{m+1} = P_{m+1}P_{m+2}$. Then $|S(QG_k)| = 2k$.
\end{lemma}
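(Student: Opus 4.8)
The plan is to prove $S(QG_k) = D_k$. Since Lemma~1 gives $D_k \le S(QG_k)$ and $|D_k| = 2k$, this is the same as showing $|S(QG_k)| \le 2k$, i.e. that every $\tau \in S(QG_k)$ lies in $D_k$. First I would record the structural consequences of the hypotheses. Writing $\ell = P_mP_{m+1} = P_{m+1}P_{m+2}$, the condition $|\mathcal{L}_{QG_k}| = k-1$ means exactly one coincidence occurs among the $k$ lines $P_1P_2, \dots, P_kP_1$, so the remaining $k-2$ lines are pairwise distinct and distinct from $\ell$. (This is where $k \ge 4$ enters: for $k = 3$ the points $P_m, P_{m+1}, P_{m+2}$ would exhaust the gon and collapse all three of its lines to $\ell$, making $|\mathcal{L}_{QG_k}| = k-1$ impossible.) Reading off $\Gamma_{QG_k}$, the vertex $P_{m+1}$ is a pendant (its two incident lines both equal $\ell$, so its degree is $1$), the vertex $\ell$ is adjacent to exactly $\{P_m, P_{m+1}, P_{m+2}\}$, and every other line-vertex $P_cP_{c+1}$ is adjacent to exactly $\{P_c, P_{c+1}\}$.

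The engine of the proof is the following adaptation of the key step of Lemma~2, which I would isolate as a claim: if positions $i, i+1$ of $\tau(QG_k)$ carry a line $L := P_{\tau(i)}P_{\tau(i+1)} \ne \ell$, then $\tau(i+1) = \tau(i) \pm 1$. The point is that $\tau \in S(QG_k)$ means $\Gamma_{\tau(QG_k)}$ and $\Gamma_{QG_k}$ are literally equal, so the line-vertex $L$ has the same neighbourhood in both. Since $L \ne \ell$ is one of the distinct lines, its neighbourhood in $\Gamma_{QG_k}$ is exactly $\{P_c, P_{c+1}\}$ for the unique $c$ with $L = P_cP_{c+1}$; but in $\Gamma_{\tau(QG_k)}$ the distinct points $P_{\tau(i)}, P_{\tau(i+1)}$ are both adjacent to $L$, forcing $\{\tau(i), \tau(i+1)\} = \{c, c+1\}$. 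This is clean precisely because it compares graph neighbourhoods rather than testing collinearities, so it is insensitive to any incidental configuration point that happens to lie on $L$.

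It then remains to control the two position-pairs that carry $\ell$. Here I would again use that graph equality preserves each vertex's degree and neighbourhood. The vertex $P_{m+1}$ must remain a pendant, so both of its position-neighbours form the line $\ell$ with it and therefore lie on $\ell$; and since $\ell$ is permitted only the neighbours $\{P_m, P_{m+1}, P_{m+2}\}$ in the target graph, those two position-neighbours must be exactly $P_m$ and $P_{m+2}$. Hence $P_m, P_{m+1}, P_{m+2}$ occupy three consecutive positions with $P_{m+1}$ in the middle, the two $\ell$-carrying pairs are $\{P_m, P_{m+1}\}$ and $\{P_{m+1}, P_{m+2}\}$, and no further position-pair carries $\ell$ (any additional pair on $\ell$ would hand $\ell$ an illegal neighbour outside $\{P_m, P_{m+1}, P_{m+2}\}$). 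In particular these two pairs also have consecutive indices. Combined with the claim, every consecutive position-pair of $\tau(QG_k)$ has indices differing by $\pm 1$, and the dichotomy already established in the proof of Lemma~2 (the sign of the difference cannot change without violating injectivity of $\tau$) then forces $\tau$ to be a rotation or a reflection, i.e. $\tau \in D_k$.

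I expect the main obstacle to be the bookkeeping around $\ell$, namely ruling out the arrangement in which $P_m$ and $P_{m+2}$ are themselves position-adjacent (which would produce a non-consecutive index pair $\{m, m+2\}$ carrying $\ell$), together with the possibility that configuration points other than $P_m, P_{m+1}, P_{m+2}$ lie on $\ell$. Both are handled by insisting on equality of the neighbourhood of the vertex $\ell$ rather than on mere incidence relations; this is the device that lets the forced pendant $P_{m+1}$ pin the collinear block into place and drives the whole argument.
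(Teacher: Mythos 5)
Your proposal is correct and rests on the same two facts as the paper's proof: equality of the graphs forces each line vertex $L = P_cP_{c+1} \neq \ell$ to retain its neighbourhood $\{P_c, P_{c+1}\}$ (this is exactly the content of the paper's four cases), and the degree-one vertex $P_{m+1}$ pins its position-neighbours to $P_m$ and $P_{m+2}$ (the paper's resolution of cases 3 and 4, applied to $P_k = P_{m+1}$ after a normalizing rotation). You merely run the argument directly over all consecutive position-pairs rather than by contradiction from a single bad pair, which also disposes automatically of the worry about $P_m$ and $P_{m+2}$ being position-adjacent (their positions are already forced to be two apart, and $k \geq 4$); the underlying mathematics is the same.
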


\begin{proof}
Suppose $QG_k$ is as defined above. This implies that $P_m, P_{m+1}, P_{m+2}$ are collinear. Without loss of generality, we may assume $P_{m+1} = P_k$, since we may apply $\sigma = (12\dots k) \in S(QG_k)$ to $QG_k$, until we have moved $P_{m+1}$ into the position of $P_k$ and then relabel the points as $(P_1', \dots, P_k')$. We bring attention then to the fact that the lines  $P_iP_{i+1}$ are distinct for $1 \leq i \leq k-1$ and that the only lines in $QG_k$ equal to one another are $P_{k-1}P_k = P_kP_1$.


From here, we follow in the foot steps of the proof of Lemma 2. Recall that if $\tau \in S_k \setminus D_k$, then there exists an $i$ such that $\tau(i + 1) \neq \tau(i) \pm 1$.  For this $i$, let $\tau(i) = \ell$ and $\tau(i+1) = j$. This implies that in $\Gamma_{\tau(QG_k)}$ we have $P_{\ell} \sim P_{\ell}P_j \sim P_j$. On the other hand, in $\Gamma_{QG_k}$ we know that 
$$
P_{\ell - 1}P_{\ell} \sim P_{\ell} \sim P_{\ell}P_{\ell + 1} \hspace{1cm} \text{and} \hspace{1cm} P_{j-1}P_j \sim P_j \sim P_{j}P_{j+1}.
$$
If $\tau(QG_k) \equiv QG_k$, then we must have that $P_{\ell}P_j = P_{\ell - 1}P_{\ell}$ or $P_{\ell}P_j = P_{\ell}P_{\ell + 1}$ and $P_{\ell}P_j = P_{j-1}P_j$ or $P_{\ell}P_j = P_jP_{j+1}$.

\noindent So we are again left with the same four cases:
\begin{enumerate}
    \item $P_{\ell - 1}P_{\ell} = P_{j-1}P_j$
    \item $P_{\ell}P_{\ell+1} = P_{j}P_{j+1}$
    \item $P_{\ell - 1}P_{\ell} = P_{j}P_{j+1}$
    \item $P_{\ell}P_{\ell + 1} = P_{j-1}P_j$
\end{enumerate}

Case 1 implies that either $\ell = j$ or that $\ell = j -1 = k$. Case 2 implies that either $\ell = j$ or that $j = \ell - 1 = k$.
All of these options are contradictions, since $j \neq \ell$ and $j \neq \ell \pm 1$.

If we suppose case 3, then we obtain three possibilities. We may have $\ell - 1 = j$, a contradiction. We may have also consider $\ell = j = k$, again a contradiction. Therefore we are left with only one possibility, which is that $j = k-1$ and $\ell = 1$, in which case we obtain $P_{k}P_1 = P_{k-1}P_k$ which does not serve as a contradiction. Continuing on with this assumption, recall that $\tau(i) = \ell = 1$ and $\tau(i+1) = j = k-1$, so that in $\tau(QG_k)$, $P_1$ and $P_{k-1}$ are consecutive. Note then, for some $r$, $\tau(r) = k$, so $P_k$ is consecutive with $P_{\tau(r-1)}$ and $P_{\tau(r+1)}$ in $\tau(QG_k)$. Now, we must have that either $\tau(r-1) \neq 1, k-1$ or $\tau(r+1) \neq 1, k-1$. If not, then in $\tau(QG_k)$ we would observe either  $(\dots, P_1, P_k, P_{k-1}, \dots )$ or $(\dots, P_{k-1}, P_k, P_1, \dots)$, both cases which cannot happen since we have shown that in fact $P_{k-1} $ and $P_1$ must appear as consecutive elements in $\tau(QG_k)$. Without loss of generality, suppose $\tau(r-1) \neq 1, k-1$. Then the line $P_{\tau(r-1)}P_k$ appears in $\tau(QG_k)$. We know that in $QG_k$, $P_k$ has only one neighbor, namely $P_{k-1}P_k$, so then if $\tau(QG_k) \equiv QG_k$, then $P_{\tau(r-1)}P_k = P_{k-1}P_k$, which implies $P_{\tau(r-1)} \sim P_{k-1}P_k$ in $\Gamma_{\tau(QG_k)}$. This is a contradiction, since this edge does not appear in $\Gamma_{QG_k}$. Thus case 3 is also impossible. Case 4 follows in the same exact manner as case 3, except that the role of $j$ and $\ell$ is swapped.

Thus, each of the four cases leads us to a contradiction, implying that if $\tau \in S_k \setminus D_k$, then $\tau(QG_k) \not \equiv QG_k$. Then $S(QG_k) = D_k$.
\end{proof}

By Lemma 3, each $QG_k \in \Pi$ is equivalent to exactly $2k$ quasi $k$-gons in $A_k(\Pi)$, and so $\frac{1}{2k}|A_k(\Pi)|$ counts the number of equivalence classes of such quasi $k$-gons. 

We now comment on $\Gamma_{QG_k}$ for $QG_k \in A_k(\Pi)$. Let $QG_k = (P_1, \dots, P_k) \in A_k(\Pi)$, meaning for some $m$, $P_m, P_{m+1}, P_{m+2}$ are collinear. Then consider $G_{k-1} = (P_1, \dots, P_m, P_{m+2}, \dots, P_k)$ a $k-1$-gon. Note that $\mathcal{L}_{QG_k} = \mathcal{L}_{G_{k-1}}$ and that $\mathcal{P}_{QG_k} = \mathcal{P}_{G_{k-1}} \cup \{ P_{m+1} \}$.
Furthermore, if $P \sim \ell$ in $\Gamma_{G_{k-1}}$, then $P \sim  \ell$ in $\Gamma_{QG_k}$. We use the following example to illustrate the relationship between a $k$-gon and a quasi $(k+1)$-gon of the form described.

\begin{center}
\begin{tikzpicture}
    \draw [thick, draw = black]
        (0, -0.5) -- (6, -0.5)
        (0.7, -1.1) -- (3.3, 4.1)
        (5.3, -1.1) -- (2.7, 4.1)
       ;

    \node (P1) [circle, fill = white, draw] at (1, -0.5) {$P_1$};
    \node (P2) [circle, fill = white, draw] at (5, -0.5) {$P_3$};
    \node (P_3) [circle, fill = white, draw] at (3, 3.5) {$P_4$};
    
    \node (QG) [] at (3, 4.5) {$G_3 = (P_1, P_3, P_4)$};
    \node (G) [] at (11, 4.5) {$QG_4 = (P_1, P_2, P_3, P_4)$};
    
    \draw [thick, draw = black]
        (8, -0.5) -- (14, -0.5)
        (8.7, -1.1) -- (11.3, 4.1)
        (13.3, -1.1) -- (10.7, 4.1)
     ;

    \node (P1) [circle, fill = white, draw] at (9, -0.5) {$P_1$};
    \node (P4) [circle, fill = white, draw] at (11, -0.5) {$P_2$};
    \node (P2) [circle, fill = white, draw] at (13, -0.5) {$P_3$};
    \node (P_3) [circle, fill = white, draw] at (11, 3.5) {$P_4$};

\end{tikzpicture}
\end{center}

\begin{center}
\begin{tikzpicture}
     \node (PP1') [circle, fill = white, draw] at (1, -1) {$P_1$};
    \node (PP2') [circle, fill = white, draw] at (1, 0) {$P_3$};
    \node (PP3') [circle, fill = white, draw] at (1, 1) {$P_4$};

    \node (LL1) [circle, fill = white, draw] at (6, -0.8) {$P_1P_3$};
    \node (LL2) [circle, fill = white, draw] at (6, 0.6) {$P_3P_4$};
    \node (LL3) [circle, fill = white, draw] at (6, 2.0) {$P_4P_1$};

      \draw [ thick, draw=black] 
        (PP1') -- (LL1)
        (PP1') -- (LL3);
     \draw [ thick, draw=black] 
        (PP2') -- (LL1)
        (PP2') -- (LL2);
    \draw [ thick, draw=black] 
        (PP3') -- (LL2)
        (PP3') -- (LL3);
    
    \node (QG) [] at (3, 3) {$\Gamma_{G_3}$};
    \node (G) [] at (11.5, 3) {$\Gamma_{QG_4}$};

    \node (P1') [circle, fill = white, draw] at (9, -1) {$P_1$};
    \node (P2') [circle, fill = white, draw] at (9, 0) {$P_3$};
    \node (P3') [circle, fill = white, draw] at (9, 1) {$P_4$};
    \node (P4') [circle, fill = white, draw] at (9, 2) {$P_2$};

    \node (L1) [circle, fill = white, draw] at (14, -0.8) {$P_1P_3$};
    \node (L2) [circle, fill = white, draw] at (14, 0.6) {$P_3P_4$};
    \node (L3) [circle, fill = white, draw] at (14, 2.0) {$P_4P_1$};

      \draw [ thick, draw=black] 
        (P1') -- (L1)
        (P1') -- (L3);
     \draw [ thick, draw=black] 
        (P2') -- (L1)
        (P2') -- (L2);
    \draw [ thick, draw=black] 
        (P3') -- (L2)
        (P3') -- (L3);
    \draw [ thick, draw=black] 
        (P4') -- (L1);

\end{tikzpicture}
\end{center}

Since equivalence is directly tied to equality of graphs, we note that $\frac{1}{2k}|A_k(\Pi)|$ is counting exactly the number of distinct subgraphs of $\Gamma_{\Pi}$ that have the following form: A cycle of length $2k-2$ together with a distinct point vertex adjoined to exactly one line in the cycle. To be more rigorous, let $C$ be a cycle of length $2k-2$ in $\Gamma_{\Pi}$. We know that we may choose a $(k-1)$-gon, $G_{k-1} = (P_1, \dots, P_{k-1})$ such that $\Gamma_{G_{k-1}} = C$. If $P \not\in V(C)$ is a point vertex in $\Gamma_{\Pi}$ that is a neighbor of a line $\ell \in V(C)$, or equivalently, is a neighbor of a line $P_mP_{m+1} \in \Gamma_{G_{k-1}}$ for some $m$, then $\Gamma_{G_{k-1}}$ together with the vertex $P$ adjoined to $P_{m}P_{m+1}$, corresponds to the quasi $k$-gon $QG_k = (P_1, \dots, P_m, P, P_{m+1}, \dots, P_{k-1}) \in A_k(\Pi)$. So every graph of the form described, corresponds uniquely to an equivalence class of $QG_k$'s belonging to $A_k(\Pi)$. Denote this set of graphs by $A_{k}(\Gamma_{\Pi})$ so that $|A_k(\Gamma_{\Pi})| = \frac{1}{2k}|A_k(\Pi)|$.

\section{Bounds on $|A_k(\Gamma_{\Pi})|, |B_k(\Pi)|$ and $|Q_{k, j}|$}

\begin{lemma}
Let $\Pi$ be a projective plane of order $n$ and $n \geq k \geq 4$. Then
$$
(n-k+2)(k-1)c_{2k-2}(\Gamma_{\Pi})\leq |A_k(\Gamma_{\Pi})|\leq (n-1)(k-1)c_{2k-2}(\Gamma_{\Pi}).
$$
\end{lemma}

\begin{proof}
We may obtain bounds on $|A_k(\Gamma_{\Pi})|$ by counting the total number of cycles of length $2k-2$, and for each cycle adjoining a new point to one of the lines in the cycle. Let $C$ be a cycle of length $2k-2$. Each line in $V(C)$ has $n+1$ neighbors with at least $2$ of the neighbors in $V(C)$ and at most $k-1$ neighbors in $V(C)$. There are $k-1$ lines, and so we have that for each cycle we may create at least $(k-1)(n+1 - (k-1)) = (k-1)(n-k+2)$ distinct subgraphs belonging to $A_k(\Gamma_{\Pi})$, and at most $(k-1)(n-1)$. Clearly every graph in $A_k(\Gamma_{\Pi})$ can be obtained this way. Furthermore, since every graph in $A_k(\Gamma_{\Pi})$ corresponds to a unique cycle of length $2k-2$, then we have obtained each graph in $A_k(\Gamma_{\Pi})$ exactly once. Thus
$
(n-k+2)(k-1)c_{2k-2}(\Gamma_{\Pi})\leq |A_k(\Gamma_{\Pi})|\leq (n-1)(k-1)c_{2k-2}(\Gamma_{\Pi}).
$
\end{proof}

Let $QG_k = (P_1, P_2, \dots, P_k)$. We may associate with $QG_k$ a sequence of $k$ lines of the form $QG_k = (P_1P_2, P_2P_3, \dots, P_kP_1) = (\ell_1, \ell_2, \dots, \ell_k)$. Note the lines may not necessarily be distinct, and in the case that lines are repeated it is possible for distinct quasi $k$-gons to have the same line sequence. For the next two lemmas we will use this correspondence to obtain bounds on the remaining cases. We remind the reader that $Q_{k, j} = \{ QG_k \in Q_k : | \mathcal{L}_{QG_k} | = j \}$ and $B_k(\Pi) = Q_{k, k-1} \setminus A_k(\Pi)$.

\begin{lemma}
Let $\Pi$ be a projective plane of order $n$ and $k \geq 4$. Then
$$
|B_k(\Pi)| \leq (k-1)(k-2)N_{(k-1)}.
$$
\end{lemma}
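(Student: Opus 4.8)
The plan is to use the line-sequence $(\ell_1,\dots,\ell_k) = (P_1P_2,\dots,P_kP_1)$ associated to each quasi $k$-gon and to translate membership in $B_k(\Pi)$ into a rigid collinearity pattern. First I would record the structural fact. If $QG_k=(P_1,\dots,P_k)\in B_k(\Pi)$, then $|\mathcal{L}_{QG_k}|=k-1$, so the multiset $\{\ell_1,\dots,\ell_k\}$ has $k$ entries but only $k-1$ distinct values; hence exactly one value is repeated, with multiplicity two, say $\ell_i=\ell_j$, and all others occur once. Since $QG_k\notin A_k(\Pi)$ we have $\ell_m\neq\ell_{m+1}$ for every $m$, so the two equal slots are non-adjacent, i.e.\ $j\notin\{i-1,i,i+1\}$. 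Consequently the index sets $\{i,i+1\}$ and $\{j,j+1\}$ are disjoint, and $P_i,P_{i+1},P_j,P_{j+1}$ are four distinct points all lying on the single common line $\ell:=\ell_i=\ell_j$.

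This reduces the estimate to a counting problem, which I would attack by a union bound over the location of the repeated pair. Summing over all admissible slot-pairs the number of ordered $k$-tuples of distinct points whose four designated points are collinear gives an upper bound for $|B_k(\Pi)|$ (each element of $B_k(\Pi)$ has a unique repeated pair, so it is in fact counted exactly once, but any overcount would only help). The number of unordered non-adjacent pairs $\{i,j\}$ on a cyclic set of $k$ positions is $\binom{k}{2}-k=\tfrac{k(k-3)}{2}$. For a fixed such pair I would count the admissible tuples by choosing $P_i$ and then $P_{i+1}\neq P_i$, which determines $\ell$ and can be done in $N(N-1)$ ways; then choosing $P_j$ and $P_{j+1}$ as distinct points of $\ell$ avoiding $P_i,P_{i+1}$, in at most $(n-1)(n-2)$ ways since $\ell$ carries exactly $n+1$ points; and finally filling the remaining $k-4$ positions with distinct points in at most $(N-4)_{(k-4)}$ ways. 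This yields the per-pair bound $N(N-1)(n-1)(n-2)(N-4)_{(k-4)}$.

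It then remains to compare this with $N_{(k-1)}=N(N-1)(N-2)(N-3)(N-4)\cdots(N-k+2)$. The two products share all factors except that the per-pair bound carries $(n-1)(n-2)(N-k+1)$ in place of $(N-2)(N-3)$; the whole point of the collinearity constraint is precisely that $P_j,P_{j+1}$ cost $\sim n^2$ rather than $\sim N^2$, dropping one factor of $N$ and bringing us from $\sim N^k$ down to $\sim N^{k-1}$. I would verify the elementary inequality $(n-1)(n-2)(N-k+1)\le (N-2)(N-3)$, which holds for all $k\ge 4$ because $(n-1)(n-2)\le N-2$ and $N-k+1\le N-3$. Hence each pair contributes at most $N_{(k-1)}$, giving $|B_k(\Pi)|\le \tfrac{k(k-3)}{2}N_{(k-1)}$, and since $(k-1)(k-2)-\tfrac{k(k-3)}{2}=\tfrac{k^2-3k+4}{2}>0$ we obtain the claimed bound $|B_k(\Pi)|\le (k-1)(k-2)N_{(k-1)}$. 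The only genuinely delicate point is the bookkeeping in the per-pair count — fixing which two of the four collinear points pin down $\ell$ and checking the remaining $k-4$ positions are counted without omission — together with the clean verification of the comparison inequality; everything else is routine.
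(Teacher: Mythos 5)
Your proof is correct, but it takes a genuinely different route from the paper. The paper's key move is an injection from $B_k(\Pi)$ into line sequences: since consecutive lines are distinct, each point is recovered as $P_i = \ell_{i-1}\cap\ell_i$, so distinct quasi $k$-gons in $B_k(\Pi)$ have distinct line sequences, and it suffices to count sequences of $k$ lines with exactly $k-1$ distinct entries and no two cyclically adjacent entries equal. That count is done by taking an ordered $(k-1)$-tuple of distinct lines ($N_{(k-1)}$ ways), choosing which line to duplicate ($k-1$ ways), and inserting the duplicate into a non-adjacent slot (at most $k-2$ ways), giving $(k-1)(k-2)N_{(k-1)}$ on the nose. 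You instead count point tuples directly: you isolate the unique repeated line, observe it forces $P_i,P_{i+1},P_j,P_{j+1}$ onto a common line for a non-adjacent pair $\{i,j\}$, and run a union bound over the $\tfrac{k(k-3)}{2}$ such pairs, with the collinearity constraint costing $(n-1)(n-2)\sim N$ instead of $\sim N^2$ and thereby dropping one factor of $N$. Your route is self-contained, needs no injectivity claim, and in fact yields the sharper constant $\tfrac{k(k-3)}{2} < (k-1)(k-2)$; the paper's route buys uniformity, since the same line-sequence correspondence is reused in Lemma 6 for $Q_{k,j}$ with $j\le k-2$, where the map is no longer injective and one must bound fiber sizes instead. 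The only blemish in your write-up is the factor-matching sentence at $k=4$: there $(N-4)_{(k-4)}=1$ is empty, so the comparison degenerates to $(n-1)(n-2)\le N-2$ rather than $(n-1)(n-2)(N-k+1)\le (N-2)(N-3)$; the inequality still holds, so nothing breaks, but the bookkeeping should be stated to cover that case.
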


\begin{proof}
Let $QG_k = (P_1, P_2, \dots, P_k) \in B_k(\Pi)$ and let $(\ell_1, \ell_2, \dots, \ell_k)$ be its corresponding sequence of lines. By definition of $B_k(\Pi)$, we have that $|\mathcal{L}_{QG_k}| = k-1$ and for all $i$, $\ell_i \neq \ell_{i+1}$. This implies that we may describe each point $P \in \mathcal{P}_{QG_k}$ as an intersection of consecutive lines in the sequence $(\ell_1, \ell_2, \dots, \ell_k)$. More specifically, we have $P_{i} = \ell_{i-1} \cap \ell_{i}$ for all $i$. Therefore, distinct quasi $k$-gons must in fact have distinct line sequences.

We may then use the sequence of lines $(\ell_1, \ell_2, \dots, \ell_k)$ as an alternative description of $QG_k$. We now find an upper bound on the number of line sequences that correspond to quasi $k$-gons in $B_k(\Pi)$. In order to do this, we first consider all sequences of $(k-1)$ distinct lines. There are $N_{(k-1)}$ many such sequences. Then for each such sequence, we create sequences of $k$ lines in the following way. Pick any line in the sequence of $(k-1)$ lines, which can be done in $k-1$ ways. Then insert it into the same sequence such that it will not appear next to itself, which can be done in at most $k-2$ ways. Therefore, the number of possible such sequences in total is at most $(k-1)(k-2)N_{(k-1)}$. Clearly, any line sequence corresponding to $QG_k \in B_k(\Pi)$ can be obtained this way, so that $(k-1)(k-2)N_{(k-1)}$ is an upper bound on $|B_k(\Pi)|$.
\end{proof}

Let $QG_k = (P_1, \dots, P_k)$  with the corresponding sequence of lines $(\ell_1, \ell_2, \dots, \ell_k)$ with exactly $j$ lines being distinct. We will call a subsequence $(P_i, P_{i+1}, \dots, P_{i+t})$ of consecutive points of $QG_k$ a \textit{maximal subsequence} if all its points lie on some line $\ell$, but $P_{i-1}$ and $P_{i+t+1}$ do not lie on $\ell$. Observe that if $QG_k$ has a maximal subsequence of length $t+1$, then the corresponding line sequence of $QG_k$ has a subsequence of $t$ consecutive equal lines. We will also refer to this subsequence of lines as maximal. 

\begin{lemma}
Let $k$ and $j$ be fixed positive integers, with $k \geq 4$ and $k-2 \geq j \geq 2$. Let $\Pi$ be a projective plane of order $n$, with $n \geq k$. Then
$$
|Q_{k, j}| \leq j^{k-j}k_{(j)}\binom{N}{j}(n-1)^{k-j} = O(n^{j+k}), \hspace{0.5cm} n \rightarrow \infty
$$
and
$$
|Q_{k, 1}| = N(n+1)_{(k)}.
$$
\end{lemma}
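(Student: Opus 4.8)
The plan is to exploit the line-sequence correspondence introduced just above the lemma: to each quasi $k$-gon $QG_k = (P_1,\dots,P_k)$ I attach its cyclic sequence of lines $(\ell_1,\dots,\ell_k) = (P_1P_2,\dots,P_kP_1)$, and I bound $|Q_{k,j}|$ by first counting the admissible line sequences and then, for each such sequence, counting the quasi $k$-gons that realize it. The key structural observation is that a line sequence does \emph{not} determine $QG_k$: at a position $i$ with $\ell_{i-1}\neq\ell_i$ the point $P_i=\ell_{i-1}\cap\ell_i$ is forced, whereas at a position with $\ell_{i-1}=\ell_i$ the point $P_i$ is a ``free'' point on that common line. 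Thus the count factors as (number of line sequences with exactly $j$ distinct lines) $\times$ (number of free-point fillings), and I bound each factor separately. Since I only need an upper bound, it suffices that every element of $Q_{k,j}$ is produced at least once, so I will ignore over-counting and most distinctness constraints.

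First I would bound the number of line sequences. Choosing the underlying set of $j$ distinct lines costs $\binom{N}{j}$. Given that set, a line sequence is exactly a surjection from $\{1,\dots,k\}$ onto these $j$ lines, and I claim the number of such surjections is at most $k_{(j)}\,j^{\,k-j}$: encode a surjection $f$ by recording, for each of the $j$ target lines, one position mapping to it (say the least such position) --- an injection $\{1,\dots,j\}\to\{1,\dots,k\}$, of which there are $k_{(j)}$ --- together with the images of the remaining $k-j$ positions, each of which may be any of the $j$ lines, giving $j^{\,k-j}$ choices. Every surjection arises from at least one such encoding, so this is a valid upper bound and accounts for the factors $\binom{N}{j}\,k_{(j)}\,j^{\,k-j}$.

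Next I would bound the free-point fillings for a fixed line sequence, passing to its cyclic block (maximal-run) structure. If the sequence has $m$ maximal blocks then exactly $m$ positions are change positions (forced points) and $k-m$ are repeat positions (free points); since each of the $j$ lines occupies at least one block, $m\ge j$, so there are at most $k-j$ free points. For a free point $P_i$ inside a block lying on a line $\ell$, the two forced points at the two ends of that block already lie on $\ell$, so $P_i$ may be any of the $n+1$ points of $\ell$ except those two, giving at most $n-1$ choices. Hence each fixed line sequence is realized by at most $(n-1)^{k-m}\le (n-1)^{k-j}$ quasi $k$-gons (using $n-1\ge 1$, valid since $n\ge k\ge 4$). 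Multiplying gives $|Q_{k,j}|\le j^{\,k-j}k_{(j)}\binom{N}{j}(n-1)^{k-j}$, and since $\binom{N}{j}=\Theta(n^{2j})$ and $(n-1)^{k-j}=\Theta(n^{k-j})$ for fixed $k,j$, the bound is $O(n^{j+k})$.

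For the exact equality $|Q_{k,1}|=N(n+1)_{(k)}$, observe that $|\mathcal{L}_{QG_k}|=1$ forces all of $P_1,\dots,P_k$ onto one common line, and conversely any ordered $k$-tuple of distinct points on a line gives such a quasi $k$-gon; since $k\ge 2$ distinct points determine their line uniquely, there is no over-counting. So I would choose the line ($N$ ways) and then an ordered tuple of $k$ distinct points among its $n+1$ points ($(n+1)_{(k)}$ ways, well-defined as $n\ge k$), giving the stated value exactly. I expect the main obstacle to be the bookkeeping of the third step: one must recognize that the line sequence is not injective on $Q_{k,j}$, locate the free positions via the block structure, and see that the \emph{same} exponent $k-j$ simultaneously caps the number of ``extra'' sequence positions and the number of free points, with a larger block count $m$ only decreasing the true number of free points and thus preserving the inequality; obtaining the sharp $n-1$ (rather than $n+1$) per free point by excluding the two block-endpoint points is the one place that needs care.
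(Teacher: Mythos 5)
Your proposal is correct and follows essentially the same route as the paper: bound the number of line sequences with exactly $j$ distinct lines by $\binom{N}{j}k_{(j)}j^{k-j}$, then use the maximal-run (block) structure of the sequence to bound the number of quasi $k$-gons realizing it by $(n-1)^{k-j}$, with the $j=1$ case counted exactly. The only cosmetic difference is that you bound each free interior point by $n-1$ independently, whereas the paper uses the falling factorial $(n-1)_{(t_s-1)}$ per block before relaxing to the same power of $n-1$.
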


\begin{proof}
We follow similar line of reasoning as in the proof of Lemma 5 above. To obtain an upper bound for $|Q_{k, j}|$, we first obtain an upper bound on the number of distinct sequences of $k$ lines made out of $j$ distinct lines. We then proceed by placing an upper bound on the number of distinct $QG_k \in Q_{k, j}$ that can have the same corresponding sequence of lines.

We build the sequences in the following way. Begin with an empty sequence of $k$ available positions. Choose any $j$ disinct lines in $\Pi$. There are $\binom{N}{j}$ ways to do this. Then out of the $k$ available posiitons in the empty sequence, choose $j$ of them, and place the $j$ chosen lines into these positions in any order. There are $k_{(j)}$ ways to do this. For every remaining empty position, place one of the chosen $j$ lines to fill it. There are $k-j$ empty spots, and $j$ possible choices for each, giving us $j^{k-j}$ ways to fill all the remaining positions. So there are at most $j^{k-j}\binom{N}{j}k_{(j)}$ different sequences of lines with exactly $j$ distinct lines appearing in the sequence.



Let $QG_k = (P_1, P_2, \dots, P_k) \in Q_{k, j}$ and suppose that  $(P_i, P_{i+1}, \dots, P_{i+t})$ is a maximal subsequence of $QG_k$. As $j \geq 2$, we know that $t+1 < k$, meaning that this maximal subsequence is not equal to $QG_k$. Let $P_iP_{i+1} = \ell$ and consider $QG_k' = (P_1, \dots, P_i, P_{i+1}', \dots, P_{i+t-1}', P_{i+t}, \dots, P_k)$ where $(P_{i+1}', \dots, P_{i+t-1}')$ are any sequence of distinct points from $\ell \setminus \{ P_{i}, P_{i+t}\}$. Observe that $QG_k$ and $QG_k'$ have the same corresponding line sequence. There are $(n-1)_{t-1}$ different $QG_k'$ that can be obtained in this way. In summary, for every maximal subsequence of length $t+1$ in $QG_k$, there are $(n-1)_{t-1}$ quasi $k$-gons that have the same line sequence as $QG_k$ and only differ from $QG_k$ by the interior points in the maximal subsequence. 

Recall that in the line sequence of $QG_k$, each maximal subsequence of $QG_k$ of length $t+1$ corresponds to a maximal subsequence of length $t$ in the line sequence of $QG_k$. Any two maximal subsequences contained in the line sequence are necessarily disjoint. Furthermore, we may uniquely partition the line sequence by maximal subsequences. 

Let $L = (\ell_1, \dots, \ell_k)$ be a sequence of $k$ lines that corresponds to some $QG_k \in Q_{k, j}$. Partition $L$ into its maximal subsequences. If the number of parts is $r$, then it is clear that $r \geq j \geq 2$. Let $t_s$ be the length of each maximal subsequence, $1 \leq s \leq r$, then $t_1 + t_2 + \dots +t_r = k$. For each maximal subsequence of length $t_s$, there are at most $(n-1)_{(t_s - 1)}$ distinct quasi $k$-gons that have the same line sequence as $QG_k$ and differ from $QG_k$ only by the interior points of the corresponding maximal subsequence of $t_s +1$ points of $QG_k$. Then the total number of $QG_k$'s that have $L$ as their line sequence is given by the product
$$
\prod_{s = 1}^r (n-1)_{(t_s - 1)} \leq (n-1)^{(\sum_{s=1}^r (t_s - 1))} = (n-1)^{k - r} \leq (n-1)^{k-j}.
$$

When $j = 1$, this means that the sequence has only one distinct line. The number of such quasi $k$-gons is easy to count, as there are exactly $N$ lines in $\Pi$ and there are $(n+1)_{(k)}$ sequences of points that we can form from each line.
\end{proof}

\begin{theorem}
Let $k$ be a fixed positive integer with $k \geq 4$. Let $\Pi$ be a finite projective plane of order $n$, with $n \geq k$, and $\Gamma_{\Pi}$ be its Levi graph. Then
$$
c_{2k}(\Gamma_{\Pi}) > \frac{1}{2k}N_{(k)} - \frac{1}{2}(n-1)N_{(k-1)} -\frac{(k-1)(k-2)}{2k}N_{(k-1)} - a_kn^{2k-2}.
$$
where $a_k$ is a constant dependent only on $k$.
\end{theorem}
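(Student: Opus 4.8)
The plan is to read off the lower bound directly from the exact identity $(2)$, namely
$$
c_{2k}(\Gamma_{\Pi}) = \frac{1}{2k}\left(N_{(k)} - |Q_{k,k-1}| - \sum_{j=1}^{k-2}|Q_{k,j}|\right),
$$
and to replace each subtracted quantity by the upper bound supplied by Lemmas 4, 5, and 6. Since all three error terms are being subtracted, upper bounds on them translate into a lower bound on $c_{2k}(\Gamma_{\Pi})$. The first step is to use that $A_k(\Pi)$ and $B_k(\Pi)$ partition $Q_{k,k-1}$, so that $|Q_{k,k-1}| = |A_k(\Pi)| + |B_k(\Pi)|$, and then treat the three groups of terms ($A_k$, $B_k$, and the tail $\sum_{j=1}^{k-2}|Q_{k,j}|$) separately.

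The one genuinely non-mechanical step is controlling $|A_k(\Pi)|$, because Lemma 4 phrases its bound in terms of $c_{2k-2}(\Gamma_{\Pi})$ rather than in terms of $N_{(k-1)}$. I would bridge this gap by recalling the relation $|A_k(\Gamma_{\Pi})| = \frac{1}{2k}|A_k(\Pi)|$ together with the upper estimate $|A_k(\Gamma_{\Pi})| \leq (n-1)(k-1)\,c_{2k-2}(\Gamma_{\Pi})$, and then bounding $c_{2k-2}(\Gamma_{\Pi})$ itself. Applying the identity from the remark after Lemma 2 with $k-1$ in place of $k$ (valid since $k-1 \geq 3$) gives $c_{2k-2}(\Gamma_{\Pi}) = \frac{1}{2(k-1)}c_{k-1}(\Pi)$, and the trivial bound $c_{k-1}(\Pi) = |Q_{k-1,k-1}| \leq |Q_{k-1}| = N_{(k-1)}$ then yields $c_{2k-2}(\Gamma_{\Pi}) \leq \frac{1}{2(k-1)}N_{(k-1)}$. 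Chaining these inequalities produces exactly
$$
\frac{1}{2k}|A_k(\Pi)| \leq (n-1)(k-1)\cdot\frac{1}{2(k-1)}N_{(k-1)} = \frac{1}{2}(n-1)N_{(k-1)},
$$
which is precisely the second term in the theorem.

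The remaining two groups are routine. Lemma 5 gives $\frac{1}{2k}|B_k(\Pi)| \leq \frac{(k-1)(k-2)}{2k}N_{(k-1)}$, matching the third term verbatim. For the tail, Lemma 6 shows that each $|Q_{k,j}|$ with $2 \leq j \leq k-2$ is $O(n^{k+j})$, largest when $j=k-2$, and that $|Q_{k,1}| = N(n+1)_{(k)} = O(n^{k+2})$; since $k \geq 4$ forces $k+2 \leq 2k-2$, the entire sum $\sum_{j=1}^{k-2}|Q_{k,j}|$ is bounded by a polynomial in $n$ of degree $2k-2$ whose coefficients depend only on $k$. Because these coefficients are nonnegative and $n \geq k \geq 4$, the whole sum is dominated by a single term of the form $C_k n^{2k-2}$ for a constant $C_k$ depending only on $k$, uniformly in $n \geq k$; choosing $a_k$ strictly larger than $C_k/(2k)$ gives $\frac{1}{2k}\sum_{j=1}^{k-2}|Q_{k,j}| < a_k n^{2k-2}$, which is what introduces the strict inequality in the statement. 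Substituting the three estimates into $(2)$ and recalling $N_{(k)} = |Q_k|$ then delivers the claimed bound.

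The main obstacle, such as it is, lies entirely in the second paragraph: correctly converting the $c_{2k-2}(\Gamma_{\Pi})$-phrased bound of Lemma 4 into the $N_{(k-1)}$-phrased bound appearing in the theorem, which forces one to invoke the cycle/$k$-gon correspondence at index $k-1$ and the crude count of $(k-1)$-gons. Once that conversion is in place, the rest is bookkeeping and elementary degree counting; the only subtlety worth flagging is that the tail bound must hold for all $n \geq k$ rather than merely asymptotically, which is why I emphasize the nonnegativity of the polynomial coefficients when collapsing the sum into $a_k n^{2k-2}$.
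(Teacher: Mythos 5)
Your proposal is correct and follows essentially the same route as the paper: it starts from identity $(2)$, splits $|Q_{k,k-1}|$ into $|A_k(\Pi)|+|B_k(\Pi)|$, converts the Lemma 4 bound via $c_{2k-2}(\Gamma_{\Pi}) \leq \frac{1}{2(k-1)}N_{(k-1)}$, applies Lemma 5 to $|B_k(\Pi)|$, and absorbs the tail $\sum_{j=1}^{k-2}|Q_{k,j}|$ into $a_k n^{2k-2}$ exactly as the paper does. Your added care about the strict inequality and the uniformity of the tail bound for all $n \geq k$ is a minor refinement, not a different argument.
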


\begin{proof}
Recall that 
$$
c_{2k}(\Gamma_{\Pi}) = \frac{1}{2k}(|Q_k| - |Q_{k, k-1}| -  \cdots - |Q_{k, 1}|).
$$
Clearly $c_{2k-2}(\Pi) \leq |Q_{k-1}| = \frac{1}{2(k-1)}N_{(k-1)}$, which together with Lemma 4 implies that 
$$
\frac{1}{2k}|A_k(\Pi)|  = |A_k(\Gamma_{\Pi})| \leq (n-1)(k-1)c_{2k-2}(\Gamma_{\Pi}) \leq \frac{1}{2}(n-1)N_{(k-1)}.
$$
Furthermore, as $A_k(\Pi) \dot\cup B_k(\Pi) = Q_{k, k-1}$, by applying the inequality above to $|A_k(\Pi)|$ and Lemma 5 to $|B_k(\Pi)|$, we have
$$
\frac{1}{2k}|Q_{k, k-1}| = \frac{1}{2k}|A_k(\Pi)| + \frac{1}{2k}|B_k(\Pi)| \leq \frac{1}{2}(n-1)N_{(k-1)} + \frac{(k-1)(k-2)}{2k}N_{(k-1)}.
$$
Finally, Lemma 6 implies that 
$$
\sum_{j = 1}^{k-2}|Q_{k, j}| \leq a_kn^{2k-2}
$$
for some constant $a_k$ dependent only on $k$.
Combining these results yields the claimed lower bound for $c_{2k}(\Gamma_{\Pi})$.
\end{proof}

Now we present an upper bound for $c_{2k}(\Gamma_{\Pi})$.

\begin{theorem}
Let $\Pi$ be a finite projective plane of order $n$ and $\Gamma_{\Pi}$ be its Levi graph. If $k$ is a fixed integer with $n \geq k \geq 4$, then
$$
c_{2k}(\Gamma_{\Pi}) \leq \frac{1}{2k}N_{(k)} - (n-k+2)(k-1)c_{2k-2}(\Gamma_{\Pi})
$$
\end{theorem}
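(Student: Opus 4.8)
The plan is to read the upper bound straight off the exact formula (2), invoking only the \emph{lower} half of Lemma 4 together with the fact that all the remaining terms in the partition are nonnegative. Starting from
$$
c_{2k}(\Gamma_{\Pi}) = \frac{1}{2k}\left(|Q_k| - |Q_{k, k-1}| - \cdots - |Q_{k, 1}|\right)
$$
and substituting $|Q_k| = N_{(k)}$, I would rewrite this as $c_{2k}(\Gamma_{\Pi}) = \frac{1}{2k}N_{(k)} - \frac{1}{2k}\sum_{j=1}^{k-1}|Q_{k,j}|$. Since the target is an \emph{upper} bound on $c_{2k}(\Gamma_{\Pi})$, what I actually need is a \emph{lower} bound on the subtracted sum $\sum_{j=1}^{k-1}|Q_{k,j}|$.

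First I would discard every term except $j = k-1$: each $|Q_{k,j}|$ is a cardinality, hence nonnegative, so $\sum_{j=1}^{k-1}|Q_{k,j}| \geq |Q_{k,k-1}|$. Next, recalling the disjoint decomposition $Q_{k,k-1} = A_k(\Pi)\,\dot\cup\,B_k(\Pi)$, nonnegativity of $|B_k(\Pi)|$ gives $|Q_{k,k-1}| \geq |A_k(\Pi)|$. Finally, the identity $|A_k(\Gamma_{\Pi})| = \tfrac{1}{2k}|A_k(\Pi)|$ established after Lemma 3 converts this into a statement about $A_k(\Gamma_{\Pi})$, to which the lower estimate of Lemma 4 applies verbatim, yielding
$$
\tfrac{1}{2k}|A_k(\Pi)| = |A_k(\Gamma_{\Pi})| \geq (n-k+2)(k-1)c_{2k-2}(\Gamma_{\Pi}).
$$
Chaining these inequalities produces $\tfrac{1}{2k}\sum_{j=1}^{k-1}|Q_{k,j}| \geq (n-k+2)(k-1)c_{2k-2}(\Gamma_{\Pi})$, and subtracting this from $\tfrac{1}{2k}N_{(k)}$ gives precisely the claimed bound.

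I do not expect a genuine analytic obstacle here; the entire content is the observation that for the upper bound we need only the contribution of the ``one extra collinear point'' configurations counted by $A_k(\Gamma_{\Pi})$, and can afford to throw away all the smaller-$j$ contributions (and the $B_k(\Pi)$ part) since they only strengthen the bound. The one place demanding care is the bookkeeping of normalizations: I must track the factor $\tfrac{1}{2k}$ through the chain and use the identity $|A_k(\Gamma_{\Pi})| = \tfrac{1}{2k}|A_k(\Pi)|$ so that the final coefficient $(n-k+2)(k-1)$ appears cleanly rather than being off by the factor $2k$. This is the mirror image of the lower bound argument in Theorem 3, where the \emph{upper} half of Lemma 4 and the estimates of Lemmas 5 and 6 were needed instead.
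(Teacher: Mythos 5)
Your argument is correct and is essentially identical to the paper's own proof: both start from identity (2), drop all terms except $|Q_{k,k-1}|$, discard $|B_k(\Pi)|$ from the decomposition $Q_{k,k-1} = A_k(\Pi)\,\dot\cup\,B_k(\Pi)$, and apply the lower estimate of Lemma 4 via $|A_k(\Gamma_{\Pi})| = \tfrac{1}{2k}|A_k(\Pi)|$. The normalization by $\tfrac{1}{2k}$ is tracked correctly, so nothing further is needed.
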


\begin{proof}
Observe that
$$
c_{2k}(\Gamma_{\Pi}) = \frac{1}{2k}(|Q_k| - |Q_{k, k-1}| - \cdots - |Q_{k, 1}|) \leq \frac{1}{2k}|Q_k| - \frac{1}{2k}|Q_{k, k-1}|.
$$
As $|Q_{k, k-1}| = |A_k(\Pi)| + |B_{k}(\Pi)|$, then by Lemma 4,
$$
\frac{1}{2k}|Q_{k, k-1}| \geq \frac{1}{2k}|A_k(\Pi)| = |A_{k}(\Gamma_{\Pi})| \geq (n- k + 2)(k-1)c_{2k-2}(\Gamma_{\Pi}).
$$
Recalling that $|Q_k| = N_{(k)}$, we obtain
$$
c_{2k}(\Gamma_{\Pi}) \leq \frac{1}{2k}N_{(k)} - (n-k+2)(k-1)c_{2k-2}(\Gamma_{\Pi}).
$$
\end{proof}

\section{Proofs of Theorems 1 and 2}

\noindent We now have all the tools we need to prove Theorem 1. For convenience we restate the theorem here.

\bigskip

\noindent\textbf{Theorem 1.}
{\em 
Let $\Pi$ be a projective plane of order $n$ and $\Gamma_{\Pi}$ be its Levi graph. Then for fixed $k \geq 4$,}
$$
c_{2k}(\Gamma_{\Pi}) = \frac{1}{2k}n^{2k} +O(n^{2k-2}), \hspace{1cm} n \rightarrow \infty
$$

\begin{proof}
All we need to demonstrate, is that the coefficient of $n^{2k-1}$ is 0. Theorem 3 states that 
$$
c_{2k}(\Gamma_{\Pi}) > \frac{1}{2k}N_{(k)} - \frac{1}{2}(n-1)N_{(k-1)} -\frac{(k-1)(k-2)}{2k}N_{(k-1)} - a_kn^{2k-2}.
$$
Recalling that $N = n^2 + n +1$, observe that the only terms that contribute to the coefficient of $n^{2k-1}$ come from 
$$
\frac{1}{2k}N_{(k)} - \frac{1}{2}(n-1)N_{(k-1)}.
$$
It is easy to see that both $\frac{1}{2k}N_{(k)}$ and $\frac{1}{2}(n-1)N_{(k-1)}$ have $1/2$ as a coefficient of $n^{2k-1}$. This implies that the our lower bound has $0$ as the coefficient of $n^{2k-1}$.

Now, Theorem 3 gave us that 
$$
c_{2k}(\Gamma_{\Pi}) \leq \frac{1}{2k}N_{(k)} - (n-k+2)(k-1)c_{2k-2}(\Gamma_{\Pi}).
$$
By applying Theorem 4 to $c_{2k-2}(\Gamma_{\Pi})$ we obtain an upper bound on $c_{2k}(\Gamma_{\Pi})$ where the coefficient of $n^{2k-1}$ is again 0.
\end{proof}

\begin{cor}
Let $k \geq 4$, then
$$
\text{ex}(v, C_{2k},   \mathcal{C}_{\text{odd}}  \cup \{ C_4 \}) \geq \left(\frac{1}{2^{k+1}k}-o(1)\right)v^k, \hspace{1cm} v \rightarrow \infty.
$$
\end{cor}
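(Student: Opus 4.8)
The plan is to use the Levi graph $\Gamma_{\Pi}$ itself as the extremal construction. First I would record the two structural facts noted in the introduction: $\Gamma_{\Pi}$ is bipartite and has girth $6$, so it contains no odd cycle and no $C_4$, i.e. it is $(\mathcal{C}_{\text{odd}} \cup \{C_4\})$-free. Since $\text{ex}(v, C_{2k}, \mathcal{C}_{\text{odd}} \cup \{C_4\})$ is by definition the maximum number of copies of $C_{2k}$ over all $v$-vertex graphs avoiding this family, any single such graph on $v$ vertices gives a lower bound. Thus for a plane $\Pi$ of order $n$, whose Levi graph has $v_0 = 2N = 2(n^2+n+1)$ vertices, Theorem 1 yields directly
$$
\text{ex}(v_0, C_{2k}, \mathcal{C}_{\text{odd}} \cup \{C_4\}) \geq c_{2k}(\Gamma_{\Pi}) = \frac{1}{2k}n^{2k} + O(n^{2k-2}).
$$

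The next step is to re-express this in terms of the vertex count. Since $v_0/2 = n^2+n+1$, we have $(v_0/2)^k = n^{2k}(1 + O(1/n))$, hence $n^{2k} = 2^{-k}v_0^k(1+o(1))$, while the error term satisfies $O(n^{2k-2}) = O(v_0^{k-1})$. Substituting into the estimate above gives
$$
c_{2k}(\Gamma_{\Pi}) = \frac{1}{2k}\cdot\frac{v_0^k}{2^k}(1+o(1)) + O(v_0^{k-1}) = \frac{1}{2^{k+1}k}v_0^k(1+o(1)),
$$
which establishes the claimed bound along the subsequence of values $v = 2(n^2+n+1)$.

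The one genuine difficulty, and the step I expect to be the main obstacle, is that projective planes are known to exist only when $n$ is a prime power, so the relation $v = 2(n^2+n+1)$ holds only along a sparse set of $v$; I must interpolate to all $v \to \infty$. To do this I would first observe that adjoining isolated vertices neither creates an odd cycle or a $C_4$ nor changes the number of $C_{2k}$'s, so $\text{ex}(\cdot)$ is nondecreasing in $v$. Given $v$, choose the largest prime power $q$ with $v_0 := 2(q^2+q+1) \leq v$, take the Levi graph of a plane of order $q$, and pad it with $v - v_0$ isolated vertices to reach exactly $v$ vertices. It remains to verify that $v_0 = v(1+o(1))$: by maximality of $q$, the next prime power $q'$ obeys $2(q'^2+q'+1) > v$, and since consecutive prime powers satisfy $q'/q \to 1$ (the prime number theorem suffices, and sharper prime-gap bounds give more), one deduces $q^2 = (v/2)(1+o(1))$ and therefore $v_0 = v(1+o(1))$. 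Feeding this into the displayed estimate gives
$$
\text{ex}(v, C_{2k}, \mathcal{C}_{\text{odd}} \cup \{C_4\}) \geq \frac{1}{2^{k+1}k}v^k(1+o(1)) = \left(\frac{1}{2^{k+1}k}-o(1)\right)v^k,
$$
as claimed.
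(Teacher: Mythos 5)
Your proposal is correct and follows essentially the same route as the paper: take the Levi graph of a plane of prime-power order as the construction, apply Theorem 1 at $v = 2(n^2+n+1)$, and interpolate to all large $v$ via monotonicity of $\text{ex}$ (padding with isolated vertices) together with the fact that consecutive primes have ratio tending to $1$. The only cosmetic difference is that the paper invokes the Baker--Harman--Pintz prime-gap bound where you correctly observe that the prime number theorem already suffices for an $o(1)$ error term.
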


\begin{proof}
Theorem 1 implies that the stated result is in fact true when $v = 2(n^2 + n +1)$ where $n$ is a prime power with $n \geq k$. In \cite{Bake}, Baker, Harman, and Pintz show that for all sufficiently large $v$, there exists a prime in the interval $[v - v^{0.525}, v]$. A standard argument using this fact yields the corollary for all sufficiently large $v$.
\end{proof}

\begin{theorem}
Let $k \geq 3$, then
$$
\text{ex}(v, C_{2k}, \mathcal{C}_{\text{odd}} \cup \{ C_4 \}) \leq \frac{1}{2k}\left(\frac{v}{2}\right)_{(k)}.
$$
\end{theorem}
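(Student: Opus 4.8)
The plan is to exploit the fact that a $(\mathcal{C}_{\text{odd}} \cup \{C_4\})$-free graph $G$ is precisely a bipartite graph of girth at least $6$, so that any two distinct vertices have \emph{at most one} common neighbor. Fix such a $G$ on $v$ vertices together with a bipartition $V(G) = X \cup Y$, and assume without loss of generality that $|X| \leq |Y|$, so that $|X| \leq v/2$. Since $G$ is bipartite, every $2k$-cycle alternates between $X$ and $Y$ and therefore uses exactly $k$ vertices from $X$. The idea is to encode each $2k$-cycle by the cyclic sequence of its $X$-vertices and to show that the $Y$-vertices are then forced, so that the number of $2k$-cycles is controlled by the number of ordered $k$-tuples of vertices of $X$.

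First I would make the encoding precise. Given a $2k$-cycle $C$, reading its $X$-vertices in the cyclic order in which they appear around $C$ yields a cyclic arrangement of $k$ distinct vertices; choosing a starting vertex and a direction produces $2k$ ordered tuples $(x_1, \dots, x_k)$ of distinct vertices of $X$. Because these $k$ vertices are distinct, the dihedral group $D_k$ acts freely on this arrangement, so $C$ gives rise to exactly $2k$ \emph{distinct} tuples. Conversely, I would argue that an ordered tuple $(x_1, \dots, x_k)$ of distinct $X$-vertices determines at most one $2k$-cycle: in any cycle realizing this tuple, the vertex lying between $x_i$ and $x_{i+1}$ must be a common neighbor of $x_i$ and $x_{i+1}$, and by $C_4$-freeness such a common neighbor is unique. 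Hence the entire vertex sequence $x_1, y_1, x_2, y_2, \dots, x_k, y_k$ is determined by the tuple.

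Combining the two directions, the set of ordered $k$-tuples of distinct $X$-vertices that arise from some $2k$-cycle is partitioned into blocks of size exactly $2k$, one block per cycle. Since the total number of ordered $k$-tuples of distinct vertices of $X$ is $|X|_{(k)}$, this gives
$$
c_{2k}(G) \leq \frac{1}{2k}\,|X|_{(k)}.
$$
Finally I would invoke monotonicity: the falling factorial $t_{(k)} = t(t-1)\cdots(t-k+1)$ is nondecreasing for $t \geq k-1$ and vanishes at the integers $0 \le t \le k-1$; since $|X| \le v/2$, either $|X| \geq k$ and monotonicity on $[k-1,\infty)$ gives $|X|_{(k)} \leq (v/2)_{(k)}$ directly, or $|X| \le k-1$ and then $G$ contains no $2k$-cycle at all. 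Either way the claimed bound follows.

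The main obstacle is the clean justification of the $2k$-to-one correspondence: specifically, verifying that each cycle yields exactly $2k$ distinct tuples (freeness of the dihedral action, which uses $k \geq 3$ together with distinctness of the $X$-vertices) and that $C_4$-freeness forces the intermediate $Y$-vertices, so that no tuple is counted for two different cycles. The remaining steps, namely reducing to the smaller side of the bipartition and the monotonicity of the falling factorial, are routine.
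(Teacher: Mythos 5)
Your proposal is correct and is essentially the paper's argument: both project each $2k$-cycle onto the smaller side $A$ of the bipartition, use $C_4$-freeness (uniqueness of common neighbors) to recover the intermediate vertices, and bound the count by $\frac{1}{2k}|A|_{(k)} \leq \frac{1}{2k}\left(\frac{v}{2}\right)_{(k)}$. The paper phrases the projection via the auxiliary simple graph $G^2$ on $A$ and counts $k$-cycles there, while you count ordered $k$-tuples modulo the dihedral action directly, but this is the same computation; your explicit handling of the case $|X| \leq k-1$ is a minor point the paper leaves implicit.
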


\begin{proof}
Let $G$ be a bipartite graph containing no $C_4$ as a subgraph. As $G$ is bipartite, we may partition $V(G)$ into two independent sets. Let $A$ represent the smaller of the two parts, so that $|A| \leq v/2$. Let $G^2$ be the graph obtained from $G$ in the following manner: $V(G^2) = A$ and if $x, y$ are distinct vertices in $A$, then $x \sim y$ in $G^2$ when there exists $z \in V(G)$ such that $x \sim z \sim y$ in $G$. By definition, $G^2$ has no loops and since $G$ has no $C_4$, then $G^2$ has no multiple edges, implying that $G^2$ is simple.

Suppose that $C_{2k}$ is some cycle of length $2k$ in $G$. As $G$ is bipartite, exactly $k$ vertices of $C_{2k}$ are in $G$, and as a result, every cycle of length $2k$ has a unique corresponding cycle of length $k$ in $G^2$. Therefore, the number of $k$ cycles in $G^2$, is an upper bound on the number of $2k$-cycles in $G$. As $G^2$ is simple, the number of $k$ cycles in $G^2$ is no more than in a complete graph of order $|G^2|$. Thus the number of cycles of length $2k$ in $G$ is no more than
$$
\frac{1}{2k}|A|_{(k)} \leq \frac{1}{2k}\left(\frac{v}{2}\right)_{(k)}.
$$
\end{proof}
\noindent Finally, combining our results above, Theorem 2 falls out as a simple corollary. 

\bigskip

\noindent\textbf{Theorem 2.}
{\em Let $k \geq 4$ and $n \geq k$ be a prime power. If $v = 2(n^2 + n + 1)$, then} 
$$
\text{ex}(v, C_{2k}, \mathcal{C}_{\text{odd}} \cup \{ C_4 \}) = \left(\frac{1}{2^{k+1}k}-o(1)\right)v^k, \hspace{1cm} v \rightarrow \infty.
$$

\section*{Concluding Remarks}
At this time, it is not known whether $c_{2k}(\Gamma_{\Pi})$ is a polynomial in $n$ for all $k$. For $k  = 3, 4, \dots, 10$, the exact value of $c_{2k}(\Gamma_{\Pi})$ was determined in  $\cite{LazMelV2}, \cite{Vor}$, and it was shown that the function $c_{2k}(\Gamma_{\Pi})$ is a polynomial of degree $2k$. Let $k = 3, 4, \dots, 10$ and $\Pi$ be a projective plane of order $n$, and 
$$
c_{2k}(\Gamma_{\Pi}) = a_{2k}n^{2k} + a_{2k - 1}n^{2k-1} + \cdots + a_0.
$$
The following table records the coefficient data for the first, second, third and fourth coefficients, namely the coefficients of $n^{2k}, n^{2k-1}, n^{2k-2}$ and $n^{2k-3}$ respectively.

\newpage

\begin{table}[h!]
    \centering
    \begin{tabular}{c|c|c|c|c}
           & $a_{2k}$ & $a_{2k-1}$ & $a_{2k-2}$  & $a_{2k-3}$  \\ \hline
    $k=3$ & 1/6 & 1/3 & 1/3 & 1/6 \\ \hline
    $k=4$ & 1/8 & 0 & -1/8 & -1/8 \\ \hline
    $k=5$ & 1/10 & 0 & 0 & -1/10 \\ \hline
    $k=6$ & 1/12 & 0 & -1/2 & 0 \\ \hline
    $k=7$ & 1/14 & 0 & -1 & 3/2 \\ \hline
    $k=8$ & 1/16 & 0 & -3/2 & 3 \\ \hline
    $k=9$ & 1/18 & 0 & -2 & 9/2 \\ \hline
    $k=10$ & 1/20 & 0 & -5/2 & 6 \\  \hline
    \end{tabular}
    \caption{The 1st, 2nd, 3rd, and 4th coefficients in the formula for $c_{2k}(\Gamma_{\Pi})$ up to $k = 10$.}
    \label{tab:my_label}
\end{table}

There is a pattern that can be observed for each coefficient in the table and for sufficiently large $k$. For $a_{2k}$, the pattern begins at $k= 3$. For $a_{2k-1}$, the pattern begins at $k = 4$. For $a_{2k-2}$, the patter begins at $k = 5$, in which case we see a decrease by 1/2 every successive $k$ after $k = 5$. For $a_{2k-3}$, we notice an increment of $3/2$ every successive $k$ after $k = 6$. In light of the information obtained from the table, we state the following conjecture.
\begin{conjecture}
Let $k$ be a fixed positive integer with $k \geq 6$. Let $\Pi$ be a projective plane of order $n$ and $\Gamma_{\Pi}$ it's Levi graph, then
$$
c_{2k}(\Gamma_{\Pi}) = \frac{1}{2k}n^{2k} -\frac{1}{2}(k-5)n^{2k-2} + \frac{3}{2}(k-6)n^{2k-3} + O(n^{2k-4}), \hspace{0.5cm} n \rightarrow \infty.
$$
\end{conjecture}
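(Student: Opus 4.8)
The plan is to push the exact-counting identity
$$
c_{2k}(\Gamma_{\Pi}) = \frac{1}{2k}\left(N_{(k)} - |Q_{k, k-1}| - \sum_{j=1}^{k-2}|Q_{k,j}|\right)
$$
from the leading-order accuracy that sufficed for Theorem 1 all the way down to the coefficients of $n^{2k-2}$ and $n^{2k-3}$. By Lemma 6, $|Q_{k,j}| = O(n^{j+k})$, so only $|Q_{k,k-1}|$, $|Q_{k,k-2}|$ and $|Q_{k,k-3}|$ can affect these two coefficients; every term with $j \leq k-4$ is absorbed into the $O(n^{2k-4})$ error. The whole problem therefore reduces to computing four quantities to sufficiently high order: the polynomial $N_{(k)}$, the sum $|Q_{k,k-1}| = |A_k(\Pi)| + |B_k(\Pi)|$, the leading term of $|Q_{k,k-2}|$, and the leading term of $|Q_{k,k-3}|$. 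As a first, routine step I would expand $N_{(k)} = \prod_{i=0}^{k-1}(n^2 + n + 1 - i)$ as an explicit polynomial in $n$ and read off the coefficients of $n^{2k-2}$ and $n^{2k-3}$, each factor being quadratic in $n$.

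The heart of the argument is the computation of $|A_k(\Pi)| = 2k\,|A_k(\Gamma_{\Pi})|$ to three leading orders. Using the bijection established just before Lemma 4, one has $|A_k(\Gamma_{\Pi})| = \sum_C \sum_{\ell \in C}(n+1 - d_C(\ell))$, where $C$ ranges over the $(2k-2)$-cycles of $\Gamma_{\Pi}$ and $d_C(\ell)$ is the number of cycle-points lying on the line $\ell$. For generic cycles $d_C(\ell) = 2$ for every line, producing the main term $(k-1)(n-1)c_{2k-2}(\Gamma_{\Pi})$, while the cycles possessing a line through three or more of their points form a lower-order correction that must itself be counted to leading order. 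This expresses the coefficients we are after in terms of the three highest-order coefficients of $c_{2k-2}(\Gamma_{\Pi})$, which is exactly what an inductive hypothesis on $k$ supplies. I would therefore organize the entire computation as an induction on $k$, taking $k = 6$ and $k = 7$ (available from the concluding table) as base cases, and checking that the inductive step reproduces the linear-in-$k$ patterns $-\tfrac12(k-5)$ and $\tfrac32(k-6)$.

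It then remains to pin down $|B_k(\Pi)|$ to orders $n^{2k-2}$ and $n^{2k-3}$, together with the leading coefficients of $|Q_{k,k-2}|$ and $|Q_{k,k-3}|$. For these I would refine the line-sequence counting of Lemmas 5 and 6 from crude upper bounds into exact leading-order enumerations: classify the cyclic line-sequences by their pattern of coincidences (for $Q_{k,k-2}$, either one line repeated three times or two lines each repeated twice, and correspondingly finer patterns for $Q_{k,k-3}$), and for each pattern count both the number of admissible line-sequences and the number of quasi $k$-gons realizing it. Assembling the contributions of $N_{(k)}$, $|A_k(\Pi)|$, $|B_k(\Pi)|$, $|Q_{k,k-2)}|$ and $|Q_{k,k-3}|$ at degrees $n^{2k-2}$ and $n^{2k-3}$, and dividing by $2k$, should then yield the claimed coefficients.

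The main obstacle I anticipate lies precisely in these exact enumerations of the degenerate classes $B_k$, $Q_{k,k-2}$ and $Q_{k,k-3}$. The bounds in Lemmas 5 and 6 suffice for Theorem 1 only because they discard the intersection constraints among the chosen lines; to extract exact coefficients one must now track those constraints, since configurations in which the forced coincidences induce additional, non-generic incidences (three lines through a common point, or a repeated line whose two interior point-choices are forced to coincide) must be separated out and counted at their true order. Managing this inclusion–exclusion over coincidence patterns, while simultaneously carrying the three subleading coefficients of $c_{2k-2}(\Gamma_{\Pi})$ through the induction, is where the real difficulty sits; the arithmetic that finally collapses everything into the clean forms $-\tfrac12(k-5)$ and $\tfrac32(k-6)$ is then a lengthy but essentially mechanical verification.
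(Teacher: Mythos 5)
This statement is the paper's concluding \emph{Conjecture}: the paper offers no proof of it at all, only empirical support from the table of exact coefficients for $3 \le k \le 10$ computed in \cite{LazMelV2} and \cite{Vor}, so there is no argument of the author's to compare yours against. What you have written is the natural refinement of the paper's Theorem~1 machinery by two further orders, and as a research plan it points in a sensible direction; but it is a plan, not a proof. Every step that would actually establish the coefficients $-\tfrac12(k-5)$ and $\tfrac32(k-6)$ is deferred: none of the exact leading-order enumerations of $B_k(\Pi)$, $Q_{k,k-2}$, $Q_{k,k-3}$, or of the correction term in $|A_k(\Gamma_{\Pi})|$ is carried out, and the final ``mechanical verification'' that everything collapses to the conjectured linear-in-$k$ forms is asserted rather than performed.

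Two concrete gaps deserve emphasis. First, \emph{plane-independence}: the conjecture is asserted for an arbitrary projective plane of order $n$, so each count entering at orders $n^{2k-2}$ and $n^{2k-3}$ must be shown to depend only on $n$ up to $O(n^{2k-4})$. Your correction term $\sum_C\sum_{\ell\in C}(d_C(\ell)-2)$ counts $(k-1)$-gons carrying an extra collinearity; this is a configuration count of exactly the kind (cf.\ Glynn's $7$-arcs, and the paper's closing Question) that can in principle vary with the plane, and nothing in your outline addresses why it does not at the relevant order. Second, your induction is not self-contained as stated: the inductive hypothesis supplies the top three coefficients of $c_{2k-2}(\Gamma_{\Pi})$, but the correction terms you need are \emph{not} functions of $c_{2k-2}$ alone --- they are counts of decorated $(k-1)$-gons --- so the hypothesis would have to be strengthened to carry asymptotics for these auxiliary quantities through the induction as well. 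Until those enumerations are done and shown to be plane-independent, the statement remains what the paper says it is: a conjecture.
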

We would like to conclude with the question posed at the beginning of this section:
\begin{question}
Let $k$ be a fixed positive integer with $k \geq 3$. Let $\Pi$ be a projective plane of order $n$ and $\Gamma_{\Pi}$ it's Levi graph, is it the case that $c_{2k}(\Gamma_{\Pi})$ is a polynomial in $n$?
\end{question}

\section*{Acknowledgments}

The author would like to thank Dr. Felix Lazebnik and Dr. Eric Moorehouse for posing the problem and for their valuable comments that improved both the results and the exposition of this paper.

\end{document}